\definecolor{darkred}{RGB}{100,0,0}
\definecolor{darkgreen}{RGB}{0,100,0}
\definecolor{darkblue}{RGB}{0,0,150}
\def\d{{\rm d}}
\newtheorem{theorem}{Theorem}[section]
\newtheorem{prp}[theorem]{Proposition}
\newtheorem{lem}[theorem]{Lemma}
\newtheorem{cor}[theorem]{Corollary}
\theoremstyle{remark}
\newtheorem{asp}{Assumption}
\def\beq{\begin{equation}} 
\def\eeq{\end{equation}}
\def\beqn{\begin{eqnarray*}}
\def\eeqn{\end{eqnarray*}}
\def\Bitem{\begin{itemize}\setlength{\itemsep}{.2in}}
\def\bitem{\begin{itemize}\setlength{\itemsep}{.05in}}
\def\eitem{\end{itemize}}
\def\Benum{\begin{enumerate}\setlength{\itemsep}{.2in}}
\def\benum{\begin{enumerate}\setlength{\itemsep}{.05in}}
\def\eenum{\end{enumerate}}
\def\bmult{\begin{multline*}}
\def\emult{\end{multline*}}
\def\bcenter{\begin{center}}
\def\ecenter{\end{center}}
\def\bframe{\begin{frame}}
\def\eframe{\end{frame}}
\newcommand{\thmref}[1]{Theorem~\ref{thm:#1}}
\newcommand{\prpref}[1]{Proposition~\ref{prp:#1}}
\newcommand{\lemref}[1]{Lemma~\ref{lem:#1}}
\newcommand{\secref}[1]{Section~\ref{sec:#1}}
\newcommand{\figref}[1]{Figure~\ref{fig:#1}}
\newcommand{\tabref}[1]{Table~\ref{tab:#1}}
\DeclareMathOperator*{\argmax}{arg\, max}
\DeclareMathOperator*{\argmin}{arg\, min}
\def\cG{\mathcal{G}}
\def\cN{\mathcal{N}}
\def\bbI{\mathbb{I}}
\def\bbR{\mathbb{R}}
\newcommand{\E}{\operatorname{\mathbb{E}}}
\renewcommand{\P}{\operatorname{\mathbb{P}}}
\newcommand{\Var}{\operatorname{Var}}
\def\eps{\varepsilon}
\def\1{\mathbbm{1}}
\newcommand{\IND}[1]{\bbI{\{ #1 \}}}
\definecolor{purple}{rgb}{0.4,.1,.9}
\newcommand\blfootnote[1]{%
  \begingroup
  \renewcommand\thefootnote{}\footnote{#1}%
  \addtocounter{footnote}{-1}%
  \endgroup
}
\begin{document}
\thispagestyle{empty}

\title{Template Matching with Ranks}
\author{Ery Arias-Castro \and Lin Zheng}
\date{}
\maketitle

\blfootnote{The authors are with the Department of Mathematics, University of California, San Diego, USA.  Contact information is available \href{http://math.ucsd.edu/\~eariasca}{here} and \href{http://www.math.ucsd.edu/people/graduate-students/}{here}.}

\begin{abstract}
We consider the problem of matching a template to a noisy signal. Motivated by some recent proposals in the signal processing literature, we suggest a rank-based method and study its asymptotic properties using some well-established techniques in empirical process theory combined with H\'ajek's projection method. The resulting estimator of the shift is shown to achieve a parametric rate of convergence and to be asymptotically normal. Some numerical simulations corroborate these findings.
\end{abstract}

\noindent
{\small {\bf Keywords:}
matched filter, template matching, scan statistics, Spearman rank correlation, empirical processes, minimax optimality}


\section{Introduction} 
\label{sec:intro}
Template matching is the process of matching a clean and noiseless template to an observed, typically noisy signal. This topic is closely related to the problem of matching two or more noisy signals, sometimes referred to as `aligning' or `registering' the signals, and to methodology in spatial statistics falling under the umbrella name of `scan statistic'. When the template has a point of discontinuity, matching a template to the signal can be interpreted as detecting the location of the discontinuity, a more specialized task more broadly referred to as `change-point detection' in statistics.
For pointers to the vast literature on these related topics, we refer the reader to the introduction of another recent paper of ours on the topic \citep{arias2020template}.

While in our previous work we proposed and analyzed M-estimators for template matching, in the present paper we consider R-estimators instead. The former includes what is perhaps the most widely used method which consists in maximizing the Pearson correlation of the signal with a shift of the template; see \eqref{pearson} below. 
We focus here on the rank variant of this approach, which is an example of the latter category and consists, instead, in replacing the signal with the corresponding ranks before maximizing the correlation over shifts of the template; see \eqref{spearman}. 

Rank-based methods are, of course, classical in statistics \citep{sidak1999theory, hettmansperger2010robust, lehmann2006nonparametrics, gibbons2014nonparametric}. 
The theory of rank tests is particularly well-developed, featuring some prominent methods such as the Wilcoxon/Mann--Whitney and Kruskal--Wallis two- and multi-sample tests, and essentially all (other) distribution-free tests for goodness-of-fit such as the Kolmogorov--Smirnov test, and more closely related to our topic here, the Spearman and Kendall rank correlation tests for independence. 
The theory of rank estimators, sometimes called R-estimators, is also well developed, although perhaps not as well-known \citep{hettmansperger2010robust}. The most famous example may be the Hodges--Lehmann estimator, which is derived from the Wilcoxon signed-rank test.
In multiple linear regression, the asymptotic linearity and resulting normality of certain R-estimators is established in a number of publications \citep{jureckova1971nonparametric, heiler1988asymptotic, koul1993asymptotics, giraitis1996asymptotic, draper1988rank}.
Closer to our setting, some papers consider the use of ranks for the detection and/or localization of one or multiple change-points \citep{darkhovskh1976nonparametric, gombay1998rank, huvskova1997limit, lung2011homogeneity, gerstenberger2018robust, wang2020rank, arias2018distribution}.

In the signal and image processing literature per se, where a lot of the work on template matching resides, rank-based methods have been attracting some attention in recent years.  
\cite{kordelas2009robust} propose a rank variant of the well-known feature extractor SIFT, while \cite{xiong2019rank} propose a rank-based local self-similarity feature descriptor for use in synthetic-aperture radar (SAR) imaging. 
\cite{ayinde2002face} present a face recognition approach using the rank correlation of Gabor filtered images, while \cite{galea2016face} apply the Spearman rank correlation for template matching in face photo-sketch recognition. 
\cite{kong2008increasing} construct a matched-filter object detection algorithm based on the Spearman rank correlation to detect Ca$^{2+}$ sparks in biochemical applications.  
A number of papers use ranks to align images, a task also known as `stereo matching' \citep{banks2001reliability, banks1999constraint, chen2012multi, geng2012adaptive} --- a problem we will not address here but which also has a sizable literature in statistics; we provide some pointers to the literature in our recent paper \citep{arias2020template}.

\subsection{Model and methods}
\label{sec:model}

We consider a standard model for template matching, where we observe a shift of the template with additive noise, 
\beq\label{model}
Y_i = f(x_i - \theta^*) + Z_i, \quad i = 1, \dots, n,
\eeq
where $x_i := i/n$ denote the {\em design points}, $f:[0,1] \to \bbR$ is a known 1-periodic function referred to as the {\em template}, and $\theta^* \in \bbR$ is the unknown {\em shift} of interest.
The {\em noise} or measurement error variables $Z_1, \dots, Z_n$ are assumed iid with density $\phi$ and distribution function denoted $\Phi$.

\begin{asp}
\label{asp:template}
We assume everywhere that $f$ is 1-periodic, and in fact exactly so in the sense that $f(\cdot-\theta) \ne f(\cdot-\theta^*)$ on a set of positive measure whenever $\theta \ne \theta^*\! \mod 1$. We also assume that $f$ is Lipschitz continuous.
\end{asp}

\begin{asp}
\label{asp:noise}
We assume everywhere that $\phi$ is even, so that the noise is symmetric about 0.
\end{asp}

Our goal is, in signal processing terminology, to match the template $f$ to the signal $Y$, which in statistical terms consists in the estimation of the shift $\theta^*$. One of the most popular ways to do so is via maximization of the Pearson correlation, leading to the estimator
\begin{equation}
\label{pearson}
\hat\theta := \argmax_{\theta} \sum_{i=1}^n Y_i\,f(x_i - \theta).
\end{equation}
In the present context, this is equivalent to the least squares estimator in that the same estimator is also the solution to the following least squares problem 
\begin{equation}
\label{ls}
\hat\theta = \argmin_{\theta} \sum_{i=1}^n (Y_i - f(x_i - \theta))^2.
\end{equation}
Note that this corresponds to the maximum likelihood estimator when the noise distribution is Gaussian. Of course, other loss functions can be used, some of them leading to methods that are robust to noise distributions with heavy tails or to the presence of gross errors (outliers) in the signal. Our recent paper \citep{arias2020template} studies these so-called {\em M-estimators} in great detail. 

In the present paper we consider, instead, estimators based on ranks. These go by the name of {\em R-estimators} in the statistics literature. The most direct route to such an estimator is to replace the response values with their ranks, yielding
\begin{equation}
\label{spearman}
\hat\theta_{\rm rank} := \argmax_{\theta} \sum_{i=1}^n R_i\,f(x_i - \theta),
\end{equation}
where $R_i$ denotes the rank of $Y_i$ in $\{Y_1, \dots, Y_n\}$ in increasing order. Doing so is sometimes called the `rank transformation' in the signal processing literature.
Note that this is similar in spirit to replacing the Pearson correlation in \eqref{pearson} with the Spearman rank correlation, except that we do not rank the values of the template itself. We find that there is no real reason to want to replace the template values with the corresponding ranks as the template is assumed to be free of noise.

\subsection{Content}
The rest of the paper is devoted to studying the asymptotic ($n\to\infty$) properties of the estimator we propose in \eqref{spearman}, which we will refer to as the R-estimator. 
In \secref{theory}, we derive some basic results describing the asymptotic behavior of this estimator. In more detail, in \secref{consistency} we discuss the consistency of this estimator, which we are not fully able to establish but is clearly supported by computer simulations; in \secref{rate} we derive a rate of convergence for our R-estimator, which happens to be parametric and also minimax optimal; and in \secref{distribution} we derive a normal limit distribution of the same estimator, as well as its asymptotic relative efficiency with respect to maximum likelihood estimator under Gaussian noise. 
\secref{numerics} summarizes the result of some numerical experiments we performed to probe our asymptotic theory in finite samples. 
\secref{discussion} provides a discussion of possible extensions.
The mathematical proofs are gathered in \secref{proofs}.

\section{Theoretical properties}
\label{sec:theory}

In this section we study the asymptotic properties of the estimator defined in \eqref{spearman}. We first study its consistency in \secref{consistency}; bound its rate of convergence in \secref{rate}; and derive its asymptotic (normal) limit distribution in \secref{distribution}.

\subsection{Consistency}
\label{sec:consistency}

The estimator in \eqref{spearman} can be equivalently defined via
\begin{align}
\label{R-estimator}
\hat\theta_n := \argmax_\theta \widehat{M}_n(\theta), &&
\widehat{M}_n(\theta) := \frac1n \sum_{i=1}^n \frac{R_i}{n} f(x_i-\theta),
\end{align}
where we have added the subscript $n$ to emphasize that the estimator is being computed on a sample of size $n$.
To understand the large-sample behavior of this estimator, we need to understand that of $\widehat{M}_n$ as a function of $\theta$, and this leads directly to empirical process theory.

\begin{lem} \label{lem:uniform_convergence}
In probability,
\begin{align*}
\sup_{\theta \in \bbR} \big|\widehat M_n(\theta) - M(\theta)\big|
\xrightarrow{n\to\infty} 0,
\end{align*}
where
\begin{align}
\label{M}
M(\theta) := \int_0^1 \int_0^1 \Phi_2(f(x_0)-f(x)) f(x_0 + \theta^*-\theta) \d x \d x_0,
\end{align}
with
$\Phi_2(t) := \int_{-\infty}^\infty \Phi(t + z) \phi(z) \d z.$
\end{lem}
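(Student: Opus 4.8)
The plan is to exploit the representation of the normalized ranks as an empirical distribution function. Writing $\widehat F_n(y) := \frac1n \sum_{j=1}^n \mathbf 1\{Y_j \le y\}$ for the empirical CDF of the observations, we have $R_i/n = \widehat F_n(Y_i)$ (there are no ties almost surely since $\phi$ is a density), so that $\widehat M_n(\theta) = \frac1n \sum_{i=1}^n \widehat F_n(Y_i)\, f(x_i - \theta)$. The difficulty is that $\widehat F_n$ couples all the observations together, so for fixed $\theta$ the summands are not independent. The first and main step is therefore to decouple them by replacing $\widehat F_n$ with the deterministic average of the marginal distribution functions, $\bar F_n(y) := \frac1n \sum_{j=1}^n \Phi(y - f(x_j - \theta^*))$, which is exactly $\mathbb E[\widehat F_n(y)]$.

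The key estimate is a Glivenko--Cantelli bound $\sup_{y\in\bbR} |\widehat F_n(y) - \bar F_n(y)| \to 0$ in probability. Because the $Y_i$ are independent but not identically distributed (they have distinct means $f(x_i-\theta^*)$), I would invoke the uniform law of large numbers valid for independent summands indexed by a Vapnik--Chervonenkis class --- here the class of half-line indicators $\{y \mapsto \mathbf 1\{y \le t\} : t \in \bbR\}$, whose VC index is one --- for which the standard maximal inequalities do not require identical distribution. Since $|f(x_i - \theta)| \le \|f\|_\infty$ uniformly over $i$ and $\theta$, this yields
\[
\sup_{\theta \in \bbR} \Big| \widehat M_n(\theta) - \tfrac1n \sum_{i=1}^n \bar F_n(Y_i)\, f(x_i - \theta) \Big| \le \|f\|_\infty \, \sup_{y\in\bbR} \big|\widehat F_n(y) - \bar F_n(y)\big|,
\]
which tends to $0$ in probability uniformly in $\theta$. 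This is the step I expect to be the main obstacle, as it is where the randomness of the ranks is controlled.

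Writing $\widetilde M_n(\theta) := \frac1n \sum_{i=1}^n \bar F_n(Y_i)\, f(x_i - \theta)$, it remains to show $\sup_\theta |\widetilde M_n(\theta) - M(\theta)| \to 0$. For fixed $\theta$, the summands $\bar F_n(Y_i)\, f(x_i - \theta)$ are now independent and uniformly bounded by $\|f\|_\infty$, so a Chebyshev argument gives the pointwise convergence $\widetilde M_n(\theta) - \mathbb E[\widetilde M_n(\theta)] \to 0$ in probability, the variance of the average being $O(1/n)$. A direct computation using the identity $\mathbb E[\Phi(Y_i - f(x_j - \theta^*))] = \Phi_2(f(x_i - \theta^*) - f(x_j - \theta^*))$ shows that $\mathbb E[\widetilde M_n(\theta)] = \frac1{n^2} \sum_{i,j} \Phi_2(f(x_i-\theta^*) - f(x_j-\theta^*))\, f(x_i-\theta)$, which is a Riemann sum converging to $\int_0^1\!\int_0^1 \Phi_2(f(x_0-\theta^*)-f(x-\theta^*))\, f(x_0-\theta)\,\d x\, \d x_0$; the change of variables $x \mapsto x - \theta^*$, $x_0 \mapsto x_0 - \theta^*$ together with the $1$-periodicity of $f$ identifies this limit with $M(\theta)$. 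The boundedness and uniform continuity of $\Phi_2$ and the Lipschitz continuity of $f$ (Assumption~\ref{asp:template}) bound the Riemann-sum error uniformly in $\theta$.

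Finally, to upgrade pointwise to uniform convergence, I would use that both $\theta \mapsto \widetilde M_n(\theta)$ and $\theta \mapsto \mathbb E[\widetilde M_n(\theta)]$ are Lipschitz in $\theta$ with a deterministic constant no larger than the Lipschitz constant of $f$ (since $0 \le \bar F_n \le 1$), and that $M$ is Lipschitz for the same reason together with $|\Phi_2| \le 1$. Since $\widetilde M_n$ and $M$ are $1$-periodic in $\theta$, the supremum reduces to the compact interval $[0,1]$, and the standard argument --- pointwise convergence on a fine finite grid combined with the common uniform modulus of continuity --- turns the pointwise convergence into uniform convergence in probability. Combining this with the decoupling bound of the second paragraph completes the proof.
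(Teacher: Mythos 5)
Your proposal is correct and follows essentially the same route as the paper: decouple the ranks by replacing the empirical CDF with its expectation via a Glivenko--Cantelli theorem for independent non-identically distributed observations, control the remaining independent sum by Chebyshev, identify the limit through Riemann sums, and upgrade to uniformity in $\theta$ using the Lipschitz continuity of $f$. The only cosmetic differences are that the paper passes directly to the limiting $\Psi(y)=\int_0^1\Phi(y-f(x))\,\mathrm{d}x$ rather than keeping the finite-$n$ average $\bar F_n$, and proves the non-iid Glivenko--Cantelli step by an elementary discretization argument rather than invoking VC theory.
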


With the uniform convergence of $\widehat M_n$ to $M$ established in \lemref{uniform_convergence}, and with the fact that $M$ is continuous and 1-periodic, it suffices that $\theta^*$ be the unique maximizer of $M$ for the estimator $\hat\theta_n$ defined in \eqref{R-estimator} to converge to $\theta^*$ in probability, that is, to be consistent \cite[Th 5.7]{van2000asymptotic}.
We are able, under an additional mild assumption on the noise distribution, to prove that $\theta^*$ is a local maximizer of $M$. 

\begin{prp}
\label{prp:local}
$M$ is twice differentiable, with $M'(\theta^*) = 0$, and if $\phi$ is positive everywhere, $M''(\theta^*) < 0$, in which case $\theta^*$ is a local maximizer of $M$.
\end{prp}

Unfortunately, we are not able to verify that $\theta^*$ is indeed a global maximizer. However, since it is borne out by our numerical experiments (\secref{numerics}), we conjecture this is true, possibly under some additional (reasonable) conditions on the model. In the meantime, we formulate this as an assumption, which henceforth forms part of our basic assumptions.

\begin{asp}
\label{asp:global}
$\theta^*$ is the unique maximum point of $M$ defined in \eqref{M}.
\end{asp} 

\begin{theorem}
\label{thm:consistent}
Under the basic assumptions, $\hat\theta_n$ converges in probability to $\theta^*$ as $n\to\infty$.
\end{theorem}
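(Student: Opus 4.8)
This is the standard argmax-consistency result for M/R-estimators, and the excerpt already points to the template: \cite[Th.~5.7]{van2000asymptotic}. The plan is to supply the three ingredients that theorem requires — a compact parameter space, a uniquely (indeed well-separatedly) maximized limit criterion, and uniform convergence of the empirical criterion — the last of which is exactly \lemref{uniform_convergence}. First I would exploit 1-periodicity: since $f$ is 1-periodic, both $\widehat M_n$ and $M$ are 1-periodic in $\theta$, so $\theta^*$ is only identifiable modulo $1$ and I would regard $\theta$ as ranging over the circle, i.e. the compact interval $[0,1]$ with its endpoints identified; convergence of $\hat\theta_n$ is then understood mod $1$. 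On this compact domain $\widehat M_n(\theta)=\frac1n\sum_i\frac{R_i}{n}f(x_i-\theta)$ is a finite sum of continuous functions of $\theta$ (as $f$ is Lipschitz, hence continuous), so it attains its maximum and $\hat\theta_n$ is well defined, with a measurable selection among maximizers as in the cited theorem.

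Next I would upgrade Assumption~\ref{asp:global} to a well-separated-maximum statement, which is where the compactness pays off. By Proposition~\ref{prp:local}, $M$ is twice differentiable, hence continuous, and by Assumption~\ref{asp:global} it has $\theta^*$ as its unique maximizer on the circle. Fix $\varepsilon>0$ and let $K_\varepsilon$ be the set of $\theta$ at circle-distance at least $\varepsilon$ from $\theta^*$. Then $K_\varepsilon$ is compact, so the continuous function $M$ attains its supremum on it, and by uniqueness of the maximizer this supremum is strictly below $M(\theta^*)$; set $\eta := M(\theta^*)-\sup_{\theta\in K_\varepsilon}M(\theta)>0$.

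The conclusion then follows from the usual chaining argument. On the event $\{\sup_\theta|\widehat M_n(\theta)-M(\theta)|<\eta/2\}$, which by \lemref{uniform_convergence} has probability tending to $1$, using that $\hat\theta_n$ maximizes $\widehat M_n$ I would write
\[
M(\hat\theta_n) > \widehat M_n(\hat\theta_n)-\tfrac{\eta}{2} \ge \widehat M_n(\theta^*)-\tfrac{\eta}{2} > M(\theta^*)-\eta = \sup_{\theta\in K_\varepsilon}M(\theta),
\]
which forces $\hat\theta_n\notin K_\varepsilon$, i.e. the circle-distance between $\hat\theta_n$ and $\theta^*$ is below $\varepsilon$. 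Hence $\Pr(d(\hat\theta_n,\theta^*)\ge\varepsilon)\le\Pr(\sup_\theta|\widehat M_n-M|\ge\eta/2)\to0$, which is consistency.

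I do not expect a genuine obstacle here: the analytic substance lives in \lemref{uniform_convergence} (already granted) and in Assumption~\ref{asp:global} (the global-uniqueness hypothesis the authors could not prove and so impose). The only points needing care are bookkeeping ones — the reduction to a compact parameter space via periodicity, which both supplies compactness of $K_\varepsilon$ for the well-separation step and fixes the mod-$1$ interpretation of the limit, and the existence/measurability of $\hat\theta_n$, which is routine and subsumed by \cite[Th.~5.7]{van2000asymptotic}.
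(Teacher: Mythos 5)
Your proposal is correct and follows essentially the same route as the paper, which proves consistency by combining the uniform convergence of Lemma~\ref{lem:uniform_convergence} with Assumption~\ref{asp:global} and invoking \cite[Th~5.7]{van2000asymptotic} directly (the paper gives no separate proof section for this theorem). You have simply written out the standard well-separation and chaining argument that the cited theorem encapsulates, including the correct mod-$1$ compactification, so there is nothing to object to.
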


\subsection{Rate of convergence and minimaxity}
\label{sec:rate}
Besides consistency, we derive the estimator's rate of convergence in this section. The rate turns out to be parametric, i.e., the convergence of the estimator to the true value of the parameter is in $O(\sqrt{n})$. 

\begin{theorem}
\label{thm:rate}
Under the basic assumptions, $\hat\theta_n$ is $\sqrt{n}$-consistent.
\end{theorem}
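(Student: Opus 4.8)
The plan is to invoke the standard rate-of-convergence theorem for estimators defined by maximization (M-estimation; e.g., \cite[Theorem~5.52]{van2000asymptotic}), which reduces the problem to two ingredients: a local quadratic behavior of the population criterion $M$ near its maximizer, and a modulus-of-continuity bound on the fluctuations of $\widehat M_n$ around $M$. Since $\hat\theta_n$ is the \emph{exact} maximizer of $\widehat M_n$, the near-maximization gap is zero, and since $\hat\theta_n$ is already known to be consistent (Theorem~\ref{thm:consistent}), attention can be restricted to an arbitrarily small neighborhood of $\theta^*$. The conclusion $\sqrt n\,|\hat\theta_n-\theta^*|=O_P(1)$ will then follow once the modulus bound is shown to be linear in the radius $\delta$.

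The curvature ingredient is essentially immediate from Proposition~\ref{prp:local}. Since $M$ is twice differentiable with $M'(\theta^*)=0$ and $M''(\theta^*)<0$, a second-order Taylor expansion supplies a constant $c>0$ and a neighborhood of $\theta^*$ on which $M(\theta)-M(\theta^*)\le -c\,(\theta-\theta^*)^2$. Together with Assumption~\ref{asp:global} and consistency, this confines $\hat\theta_n$ to a region where the criterion is quadratically peaked, giving the first hypothesis of the rate theorem.

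The substantive work is the modulus bound, namely controlling
\[
\mathbb E\Big[\sup_{|\theta-\theta^*|<\delta}\sqrt n\,\big|(\widehat M_n-M)(\theta)-(\widehat M_n-M)(\theta^*)\big|\Big]\lesssim \delta
\]
uniformly over small $\delta$. The obstacle is that $\widehat M_n$ is built from ranks: writing $R_i/n=\hat F_n(Y_i)$ with $\hat F_n$ the empirical distribution function exposes the dependence across $i$, since $\widehat M_n(\theta)=n^{-2}\sum_{i,j}\mathbf 1\{Y_j\le Y_i\}\,f(x_i-\theta)$ is a $V$-statistic rather than an average of independent terms. My plan is to apply H\'ajek's projection: replace $\hat F_n$ by its expectation $\bar F_n(y)=n^{-1}\sum_j\Phi(y-f(x_j-\theta^*))$, splitting $\widehat M_n(\theta)$ into a leading term $n^{-1}\sum_i \bar F_n(Y_i)\,f(x_i-\theta)$, which is a genuine average of independent (though not identically distributed) summands, and a degenerate double-sum remainder. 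For the leading term, the increments between $\theta$ and $\theta^*$ carry the factor $f(x_i-\theta)-f(x_i-\theta^*)$, which the Lipschitz continuity of $f$ (Assumption~\ref{asp:template}) bounds by a multiple of $|\theta-\theta^*|$; the associated index class is therefore Lipschitz in the single parameter $\theta$, has well-controlled entropy over a shrinking $\delta$-ball, and a standard maximal inequality yields the linear-in-$\delta$ modulus. For the remainder, the conditional centering $\mathbb E[\mathbf 1\{Y_j\le Y_i\}\mid Y_i]=\Phi(Y_i-f(x_j-\theta^*))$ identifies it as a degenerate $V$-statistic whose size is $O_P(1/n)$, with $\theta$-increments again damped by the same Lipschitz factor, so it is negligible at the $\sqrt n$ scale.

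I expect the remainder analysis to be the main difficulty: one must show, uniformly over the local neighborhood, that the degenerate double sum is of strictly smaller order than the leading projection term and does not spoil the linear modulus. Once both pieces are in place, the rate theorem applies with a modulus function $\varphi_n(\delta)\propto\delta$ (so that $\delta\mapsto\varphi_n(\delta)/\delta^{\alpha}$ is decreasing for any $\alpha\in(1,2)$), and solving the rate equation $r_n^2\,\varphi_n(1/r_n)\le\sqrt n$ yields $r_n=\sqrt n$, the claimed parametric rate.
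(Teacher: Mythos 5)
Your overall architecture coincides with the paper's: both arguments rest on (i) the local quadratic drop $M(\theta)-M(\theta^*)\le -c(\theta-\theta^*)^2$ near $\theta^*$ from Proposition~\ref{prp:local} combined with consistency, and (ii) a linear-in-$\delta$ modulus bound on the centered increments of $\widehat M_n$, fed into the rate theorem \cite[Th~5.52]{van2000asymptotic} (the paper writes out the dyadic peeling by hand, but that is the same mechanism). Your leading term $n^{-1}\sum_i \bar F_n(Y_i)\,f(x_i-\theta)$ is, up to a deterministic Riemann-sum error of order $O(1/n)$, the paper's term $n^{-1}\sum_i \Psi(Y_i)\,f(x_i-\theta)$ with $\Psi(y)=\int_0^1\Phi(y-f(x))\,\mathrm{d}x$, and your treatment of it (Lipschitz index class in $\theta$, entropy maximal inequality, modulus $\lesssim\delta/\sqrt n$) is exactly the paper's Lemma~\ref{lem:simple_EP_bound}.

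The one genuine error is in the remainder step. The kernel $h(Y_i,Y_j):=\mathbf{1}\{Y_j\le Y_i\}-\Phi\big(Y_i-f(x_j-\theta^*)\big)$ is centered conditionally on $Y_i$, but it is \emph{not} centered conditionally on $Y_j$: for $j\ne i$,
\begin{align*}
\mathbb{E}\big[h(Y_i,Y_j)\,\big|\,Y_j\big]
=1-\Phi\big(Y_j-f(x_i-\theta^*)\big)-\Phi_2\big(f(x_i-\theta^*)-f(x_j-\theta^*)\big)\not\equiv 0 .
\end{align*}
Degeneracy of a two-argument kernel requires centering in \emph{each} argument, so your remainder $n^{-1}\sum_i(\hat F_n-\bar F_n)(Y_i)\,f(x_i-\theta)$ still contains the nontrivial projection onto the $Y_j$'s and is of order $O_P(1/\sqrt n)$, not $O_P(1/n)$; the claimed mechanism fails. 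The step is rescuable, and more simply than you propose: after differencing at $\theta^*$ the remainder is bounded in absolute value by $\sup_y|\hat F_n(y)-\bar F_n(y)|\cdot |f'|_\infty\,|\theta-\theta^*|$, and a Dvoretzky--Kiefer--Wolfowitz-type bound for independent, non-identically distributed observations (the paper's Lemma~\ref{lem:KS}) gives $\mathbb{E}\sup_y|\hat F_n(y)-\bar F_n(y)|\le C/\sqrt n$, hence a contribution $\lesssim\delta/\sqrt n$ to the modulus --- exactly what the rate equation tolerates. With that substitution your proof closes and is essentially the paper's.
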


The parametric rate of $\sqrt{n}$ happens to be minimax optimal in the present setting. This is established in \citep[Cor~3.9]{arias2020template} in the context of a random design corresponding to the situation where the design points, instead of being the grid points spanning the unit interval, are generated as an iid sample from the uniform distribution on the unit interval. But similar arguments carry over. We omit details and refer the reader to the discussion in \cite[Sec~6.1]{arias2020template}.

\begin{cor}
\label{cor:minimax}
The R-estimator achieves the minimax rate of convergence.
\end{cor}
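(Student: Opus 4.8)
The plan is to read ``achieves the minimax rate'' as the conjunction of two statements and to treat them separately. The first is the \emph{upper bound}: some estimator attains error of order $n^{-1/2}$. This is already available, since Theorem~\ref{thm:rate} asserts that $\hat\theta_n$ is $\sqrt n$-consistent, i.e.\ $\hat\theta_n - \theta^* = O_P(n^{-1/2})$. The second is the matching \emph{lower bound}: no estimator of $\theta^*$ in the model \eqref{model} can achieve a worst-case rate faster than $n^{-1/2}$. Establishing the latter is the only real content, and combining the two gives the corollary immediately.

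For the lower bound I would use Le Cam's two-point method on the shift parameter. Fix an interior reference value $\theta_0$ and set $\theta_1 = \theta_0 + c\,n^{-1/2}$ for a constant $c>0$, and let $P_{\theta_0}, P_{\theta_1}$ be the joint laws of $(Y_1,\dots,Y_n)$ that \eqref{model} induces under the two shifts; each is a product of the location-shifted densities $\phi(\cdot - f(x_i - \theta))$. The crux is to bound the squared Hellinger distance, equivalently to bound from below the Hellinger affinity $\prod_i \int \sqrt{\phi(y - f(x_i-\theta_0))\,\phi(y-f(x_i-\theta_1))}\,\d y$, uniformly in $n$. Writing $\delta_i := f(x_i-\theta_0) - f(x_i-\theta_1)$, one has $|\delta_i| \le L\,|\theta_0-\theta_1| = L c\, n^{-1/2}$ by Lipschitzness of $f$ (Assumption~\ref{asp:template}); and if the location family generated by $\phi$ is quadratic-mean differentiable with Fisher information $I_\phi < \infty$, each per-coordinate squared Hellinger distance is of order $I_\phi\,\delta_i^2$. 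Summing and using the first-order expansion $\delta_i \approx c\,n^{-1/2} f'(x_i-\theta_0)$ gives a total of order $I_\phi\, c^2 \cdot \frac1n\sum_i f'(x_i-\theta_0)^2 \to I_\phi\, c^2 \int_0^1 f'(x)^2\,\d x$, a finite constant. Hence the affinity between $P_{\theta_0}$ and $P_{\theta_1}$ stays bounded away from zero, the two hypotheses remain statistically indistinguishable, and Le Cam's inequality forces any estimator to incur error at least a constant multiple of $|\theta_0-\theta_1| = c\,n^{-1/2}$, which is the desired lower bound.

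This is precisely the calculation performed in \citep[Cor~3.9]{arias2020template} under the random design in which the $x_i$ are i.i.d.\ uniform, so the residual task is to transcribe it to the present fixed grid $x_i = i/n$. That transcription is routine and in fact easier than the random-design case: the average $\frac1n\sum_i f'(x_i-\theta_0)^2$ is now a deterministic Riemann sum converging to $\int_0^1 f'(x)^2\,\d x$, so no stochastic control of the design is required. Accordingly, I expect the only genuinely delicate ingredient to be not the design but the regularity of the noise: the $\sqrt n$ rate hinges on the \emph{quadratic} scaling of the per-coordinate Hellinger distance in $\delta_i$, that is, on finite Fisher information of $\phi$; were $\phi$ not smooth in this sense, the per-coordinate distance could scale only linearly in $|\delta_i|$, the divergence would diverge, and the minimax rate would actually be faster than $\sqrt n$. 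Under the regularity accompanying the cited theorem this quadratic scaling holds, and combining the resulting lower bound with the upper bound of Theorem~\ref{thm:rate} establishes that the R-estimator is rate-optimal.
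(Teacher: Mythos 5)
Your proposal is correct and takes essentially the same route as the paper: the corollary is just the upper bound of \thmref{rate} combined with the $n^{-1/2}$ minimax lower bound established in \citep[Cor~3.9]{arias2020template} for the random design, transcribed to the fixed grid (where, as you note, the design average becomes a deterministic Riemann sum). The paper omits the two-point/Hellinger calculation entirely and simply cites that reference, so your sketch of the Le Cam argument --- including the caveat that the quadratic Hellinger scaling requires regularity (finite Fisher information) of $\phi$ --- is a faithful expansion of what the paper leaves implicit.
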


\subsection{Limit distribution and asymptotic relative efficiency}
\label{sec:distribution}
In addition to obtaining a rate of convergence, we are also able to derive the asymptotic distribution, which happens to be normal. 

\begin{theorem}
\label{thm:normal}
Under the basic assumptions, strengthened with the assumption that $f$ is continuously differentiable, $\sqrt{n}(\hat\theta_n - \theta^*)$ converges weakly to the centered normal distribution with variance $\gamma^2/M''(\theta^*)^2$, where
\begin{align*}
\gamma^2 
:= \int_0^1 \int_{-\infty}^\infty \bigg[\int_0^1 (f'(x) - f'(x_0))\, \Xi(f(x)-f(x_0), z) \d x\bigg]^2 \phi(z) \d z \d x_0,
\end{align*}
with $\Xi(w, z) := \Phi(z+w) - \Phi_2(w)$, and
\begin{align*}
M''(\theta^*)
= - \int_0^1 \int_0^1  f'(x)^2 \phi_2(f(x) - f(x_0)) \d x d x_0,
\end{align*}
with 
$\phi_2(t) := \Phi_2'(t) = \int_{-\infty}^\infty \phi(z + t) \phi(z) \d z.$
\end{theorem}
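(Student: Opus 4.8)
The plan is to reduce the maximization to a score equation, linearize it around $\theta^*$, and then obtain a central limit theorem for the score through H\'ajek's projection, which is the natural device for the double-sum (rank) structure. Because $f$ is now assumed continuously differentiable and the ranks $R_i$ do not depend on $\theta$, the criterion $\widehat M_n$ in \eqref{R-estimator} is itself $C^1$ in $\theta$, so any maximizer satisfies the first-order condition $\Psi_n(\hat\theta_n)=0$, where
\[
\Psi_n(\theta) := \widehat M_n{}'(\theta) = -\frac1n\sum_{i=1}^n \frac{R_i}{n}\, f'(x_i-\theta) = -\frac{1}{n^2}\sum_{i=1}^n\sum_{j=1}^n \mathbbm{1}\{Y_j\le Y_i\}\, f'(x_i-\theta).
\]
Writing $M'$ for the derivative of the population criterion \eqref{M}, the goal of the first part is the asymptotic linearization $0=\sqrt n\,\Psi_n(\hat\theta_n)=\sqrt n\,\Psi_n(\theta^*)+M''(\theta^*)\sqrt n(\hat\theta_n-\theta^*)+o_P(1)$, which, since $M'(\theta^*)=0$ and $M''(\theta^*)<0$ by Proposition~\ref{prp:local}, rearranges to
\[
\sqrt n\,(\hat\theta_n-\theta^*) = -\,\frac{\sqrt n\,\Psi_n(\theta^*)}{M''(\theta^*)} + o_P(1).
\]

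To justify this linearization I would first invoke the $\sqrt n$-consistency of Theorem~\ref{thm:rate} to restrict attention to $\theta$ in a neighborhood of $\theta^*$ of radius $O(n^{-1/2})$, and then establish the local asymptotic equicontinuity
\[
\sup_{|\theta-\theta^*|\le K/\sqrt n}\big|\sqrt n\big[(\Psi_n-M')(\theta)-(\Psi_n-M')(\theta^*)\big]\big|\xrightarrow{P}0 .
\]
Here the crucial simplification is that the $\theta$-dependence of $\Psi_n$ enters only through the deterministic smooth factor $f'(x_i-\theta)$, the random weights $R_i/n$ being frozen; the increment process is thus controlled by bounding the variance of $-\tfrac1{\sqrt n}\sum_i(R_i/n)\big[f'(x_i-\theta)-f'(x_i-\theta^*)\big]$ around its mean, using the uniform continuity of $f'$ (on the circle) and the periodicity of $f$. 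The population increment contributes $M'(\theta)-M'(\theta^*)=M''(\theta^*)(\theta-\theta^*)+o(|\theta-\theta^*|)$ by the twice-differentiability from Proposition~\ref{prp:local}.

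The heart of the argument is the distributional limit of $\sqrt n\,\Psi_n(\theta^*)$. This is a V-statistic-type object built from the indicators $\mathbbm{1}\{Y_j\le Y_i\}$ and the weights $f'(x_i-\theta^*)$, whose underlying variables $Z_1,\dots,Z_n$ are independent but not identically distributed (the means $f(x_i-\theta^*)$ differ). I would form its H\'ajek projection $\sum_k\big(\E[\sqrt n\,\Psi_n(\theta^*)\mid Z_k]-\E[\sqrt n\,\Psi_n(\theta^*)]\big)$, bound the $L^2$ norm of the remainder to show it is $o_P(1)$, and apply the Lindeberg--Feller theorem to the projection, which is a sum of independent terms. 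The influence of the $k$-th variable has two pieces, from the two roles the index $k$ can play (as the outer index $i$ carrying $f'$, and as the inner index $j$ in the indicator); passing the Riemann sums to integrals, these produce $f'(x_0)\,\Phi(f(x_0)+z-f(x))$ and $f'(x)\,(1-\Phi(f(x_0)+z-f(x)))$, whose combination, after using $\int_0^1 f'=0$, gives the antisymmetric factor $f'(x)-f'(x_0)$. Finally, Assumption~\ref{asp:noise} (symmetry of $\phi$) is used: the change of variables $z\mapsto-z$ in the integral against $\phi(z)\,\d z$ turns $\Phi(f(x_0)+z-f(x))$, after centering by its mean $\Phi_2(\cdot)$, into the form $\Xi(f(x)-f(x_0),z)=\Phi(z+f(x)-f(x_0))-\Phi_2(f(x)-f(x_0))$, identifying the limiting variance as exactly $\gamma^2$. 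Along the way one must check that the bias $\E[\sqrt n\,\Psi_n(\theta^*)]\to0$; the diagonal terms $i=j$ and the grid-versus-integral discretization errors are negligible after the $\sqrt n$ scaling because $f$ is periodic (so $\int_0^1 f'=0$) and smooth.

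Combining the two parts by Slutsky's lemma gives $\sqrt n(\hat\theta_n-\theta^*)\Rightarrow N\big(0,\gamma^2/M''(\theta^*)^2\big)$, and the stated closed form for $M''(\theta^*)$ follows by differentiating \eqref{M} twice in $\theta$ and integrating by parts to move the derivatives onto $f$, the boundary terms vanishing by periodicity. I expect the main obstacle to be the third step: controlling the H\'ajek remainder in the independent-but-not-identically-distributed regime and carrying out the variance computation carefully enough to match the precise form of $\gamma^2$, where the noise symmetry is indispensable. A secondary technical point is the equicontinuity step under the weaker hypothesis $f\in C^1$ (rather than $C^2$), which forces the use of the modulus of continuity of $f'$ together with the centering to obtain the $o_P(1)$ control.
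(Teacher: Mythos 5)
Your proposal reaches the theorem by the classical Z-estimator route --- the first-order condition $\widehat M_n'(\hat\theta_n)=0$, asymptotic linearity of the score, and a CLT for $\sqrt n\,\widehat M_n'(\theta^*)$ obtained via H\'ajek's projection --- whereas the paper localizes the criterion itself, applies H\'ajek's projection (Lemma~\ref{lem:hajek}) to $W_n(h)=n[\widehat M_n(h/\sqrt n)-\widehat M_n(0)]$ with coefficients $b_i=f(x_i-h/\sqrt n)-f(x_i)$, and concludes with the argmax theorem (Lemma~\ref{lem:argmax}). The distributional heart of your argument is sound and matches the paper's: the projection of the double sum, the combination of the two roles of an index into the antisymmetric factor $f'(x)-f'(x_0)$, the use of the noise symmetry to produce $\Xi$, and the $L^2$ bound $Cn^{-1}\sum_i b_i^2$ on the projection remainder (which is $O(1/n)$ here since $b_i=f'(x_i)/\sqrt n$) all go through.

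The genuine gap lies in the two places where you must compare a Riemann sum built from $f'$ against its integral at the $\sqrt n$ scale: the bias check $\E[\sqrt n\,\widehat M_n'(\theta^*)]\to 0$ and the deterministic part of your equicontinuity display both reduce to bounding $\sqrt n\,\big|\tfrac1n\sum_i \kappa(x_i)-\int_0^1\kappa\big|$ for integrands $\kappa$ involving $f'$ (for the bias, essentially $\kappa=(G\circ f)'$ with $G'=g$ as in the proof of Proposition~\ref{prp:local}). When $f$ is only continuously differentiable, Lemma~\ref{lem:riemann} gives an error of order $\omega_1(1/n)$, the modulus of continuity of $f'$, and $\sqrt n\,\omega_1(1/n)$ need not vanish (e.g.\ if $\omega_1(\epsilon)\asymp 1/\log(1/\epsilon)$). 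So the assertion that the grid-versus-integral discrepancies are ``negligible after the $\sqrt n$ scaling because $f$ is periodic and smooth'' is not justified under the stated hypotheses; it would hold if $f'$ were H\"older of exponent greater than $1/2$, but that is a strengthening of the theorem's assumption. The paper's formulation sidesteps this precisely because its H\'ajek coefficients are increments of $f$ rather than values of $f'$: the relevant integrand $g_h$ then satisfies $|g_h'|_\infty\lesssim \omega_1(a_n/\sqrt n)+a_n/\sqrt n\to0$, so the Riemann error is $o(1)$ with no extra smoothness --- in effect the paper only needs an averaged version of $\sqrt n\,\E[\widehat M_n'(\theta^*)]\to0$ over $|h|\le a_n$, not the pointwise statement your score equation demands. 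To repair your route under $f\in C^1$ you would need to re-center the score at the maximizer of $\E[\widehat M_n]$ and prove that this maximizer is within $o(1/\sqrt n)$ of $\theta^*$, which amounts to redoing the paper's increment-based computation.
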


Now that the R-estimator is known to be asymptotically normal with an explicit expression for the asymptotic variance (after standardization), we can consider its (Pitman) efficiency relative to the more popular estimator based on maximizing the Pearson correlation \eqref{pearson} (which coincides with the MLE when the noise is Gaussian). 
Indeed, this estimator (denoted $\tilde\theta_n$ now) was studied in \cite[Sec~3]{arias2020template} in the setting of a random design. Adapting the arguments there, which are very similar to (and in fact simpler than) those used here, we find that $\sqrt{n}(\tilde\theta_n - \theta^*)$ is asymptotically normal with mean zero and variance $\sigma^2/\int_0^1 f'(x)^2 \d x$, where $\sigma^2$ is the noise variance.
With \thmref{normal}, we are thus able to conclude the following.

\begin{cor}
Suppose that the noise has finite variance $\sigma^2$. 
Then the asymptotic efficiency of the R-estimator \eqref{spearman} relative to the standard estimator \eqref{pearson} is given by 
\begin{align*}
\frac{\sigma^2/\int_0^1 f'(x)^2 \d x}{\gamma^2/M''(\theta^*)^2}.
\end{align*}
\end{cor}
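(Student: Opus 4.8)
The plan is to treat this statement as an immediate consequence of the two asymptotic normality results already in hand, since the asymptotic (Pitman) relative efficiency of two $\sqrt{n}$-consistent, asymptotically normal estimators is, by definition, the ratio of their asymptotic variances. First I would record the two relevant limit laws. By \thmref{normal}, $\sqrt{n}(\hat\theta_n - \theta^*)$ converges weakly to a centered normal law with variance $\gamma^2/M''(\theta^*)^2$. On the other hand, as recalled in the discussion preceding the statement, the standard estimator $\tilde\theta_n$ defined in \eqref{pearson} satisfies that $\sqrt{n}(\tilde\theta_n - \theta^*)$ is asymptotically centered normal with variance $\sigma^2/\int_0^1 f'(x)^2 \d x$, a fact obtained by adapting the random-design analysis of \cite[Sec~3]{arias2020template} to the present fixed design.

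Next I would invoke the finite-variance hypothesis. The assumption $\sigma^2 < \infty$ is precisely what ensures that the standard estimator's asymptotic variance is finite and that its central limit theorem holds, so that the efficiency comparison is well defined. With both asymptotic variances available, the efficiency of the R-estimator relative to the standard estimator is the ratio of the asymptotic variance of the latter to that of the former, namely $\big(\sigma^2/\int_0^1 f'(x)^2 \d x\big)\big/\big(\gamma^2/M''(\theta^*)^2\big)$, which is exactly the claimed expression. (With this convention a value exceeding $1$ would indicate that the R-estimator is the more efficient of the two.)

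The genuine mathematical content here lies entirely upstream: in \thmref{normal}, and in the transfer of the standard estimator's limit law from the random-design setting of \cite{arias2020template} to the fixed-design setting considered here. The only thing resembling an obstacle, therefore, is not in the corollary itself but in certifying that this transfer is legitimate; since the fixed-design arguments are, if anything, simpler than their random-design counterparts, I would simply remark that the same reasoning carries over and defer the routine verification, as the paper does.
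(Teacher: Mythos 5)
Your proposal is correct and matches the paper's treatment exactly: the paper offers no separate proof of this corollary, obtaining it immediately from \thmref{normal} together with the asymptotic normality of the Pearson-correlation estimator (adapted from the random-design analysis in the earlier work), with the relative efficiency defined as the ratio of the two asymptotic variances. Your additional remark that the finite-variance hypothesis is what makes the standard estimator's limit law (and hence the comparison) well defined is consistent with the paper's follow-up observation that the efficiency becomes infinite when the variance is infinite.
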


The result, in fact, continues to hold even when the noise has infinite variance, and in that case the asymptotic relative efficiency of the R-estimator relative to the more common estimator is infinite.

\section{Numerical experiments}
\label{sec:numerics}
We performed some simple numerical experiments to probe the asymptotic theory developed in the previous sections. We note that the implementation of the R-estimator \eqref{spearman} is completely straightforward, as it only requires replacing the observations with their ranks before the usual template matching by maximization of the correlation over shifts as in \eqref{pearson}, which is typically implemented by a fast Fourier transform. This ease of computation is in contrast with rank methods for, say, linear regression which are computationally demanding and require dedicated algorithms and implementations --- difficulties that may explain the very limited adoption of such methods in practice. 

In our experiments, we considered three noise distributions: Gaussian distribution, Student t-distribution with 3 degrees of freedom, and Cauchy distribution. We took $\theta^*=0$ throughout, which is really without loss of generality since the two methods we compared --- the standard method based on maximizing the Pearson correlation and the rank-based method that we study --- are translation equivariant.
We chose to work with the following three filters:
\beq
\text{Template $A$: } \;\;f(x)=
\begin{cases}
4x-1& 0.25\leq x<0.5,\\
3-4x& 0.5 \leq x<0.75,\\
0& \text{otherwise,}
\end{cases}
\eeq
\beq
\text{Template $B$: } \;\;f(x)=
\begin{cases}
10x-2& 0.2\leq x<0.3,\\
4-10x& 0.3 \leq x<0.4,\\
10x-6& 0.6\leq x<0.7,\\
8-10x& 0.7\leq x<0.8,\\
0& \text{otherwise}
\end{cases}
\eeq
and
\beq
 \text{Template $C$: }\;\;f(x) = \max\{0, (1-(4x-2)^2)^3\}.
\eeq
All are Lipschitz, with Template $C$ being even smoother. See \figref{Fig-1} for an illustration.

\begin{figure}[htbp]
\centering 
\subfigure[Template $A$]{
\label{fig:Fig-1-1}
\includegraphics[width=0.32\textwidth]{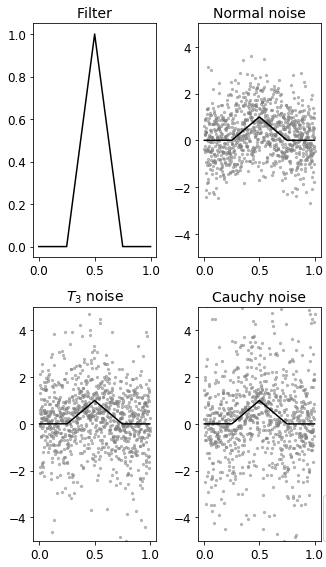}}
\subfigure[Template $B$]{
\label{fig:Fig-1-2}
\includegraphics[width=0.32\textwidth]{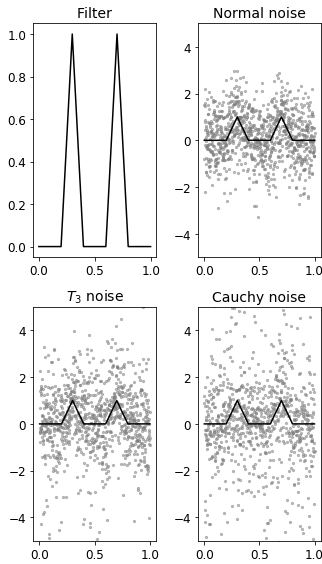}}
\subfigure[Template $C$]{
\label{fig:Fig-1-3}
\includegraphics[width=0.32\textwidth]{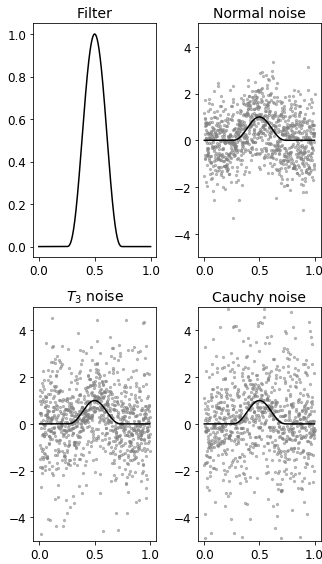}}
\caption{Templates and noisy signals. Although the sample size is $n = 10000$, for the sake of clarity, we only include $1000$ points and limit the range of the y-axis to $[-5,5]$.}
\label{fig:Fig-1}
\end{figure}

We set the sample size at $n = 10000$. Each setting, defined by a choice of filter and of noise distribution, was repeated $500$ times.  
Box plots of estimation error $|\hat{\theta}_n-\theta^*|$ are presented in Figures~\ref{Fig-2}, \ref{Fig-3} and~\ref{Fig-4}, while the distribution of $\sqrt{n}(\hat{\theta}_n-\theta^*)$ is depicted in Figures~\ref{Fig-5}, \ref{Fig-6} and~\ref{Fig-7} via histograms. The results are congruent with what the theory predicts: The rank-based method is slightly inferior to the standard method when the noise is Gaussian (exactly when the standard method coincides with the MLE), while it is superior when the noise distribution has heavier tails; and the histograms overlay nicely with the predicted asymptotic distribution. 
The asymptotic relative efficiency of R-estimator relative to the method based on maximizing the Pearson correlation is displayed in \tabref{ARE}.


\begin{table}[htbp]
\centering
\begin{tabular}{ cccc } 
\toprule  
\multirow{2}{*}{Template} & \multicolumn{3}{c}{Noise}\\
&Normal & Student $t_3$  & Cauchy\\
\midrule
Template $A$ & 0.949 & 2.008  & $\infty$ \\ 
Template $B$ & 0.940 & 1.992  & $\infty$ \\ 
Template $C$ & 0.948 & 2.008  & $\infty$ \\ 
\bottomrule
\end{tabular}
\caption{Asymptotic relative efficiency of the R-estimator \eqref{spearman} to the more common estimator \eqref{pearson}. Note that the latter is asymptotically best in the setting of Gaussian noise as it then coincides with the maximum likelihood estimator for a `smooth' model.}
\label{tab:ARE}
\end{table}

%

\begin{figure}[htbp]
\centering 
\includegraphics[width=0.9\textwidth]{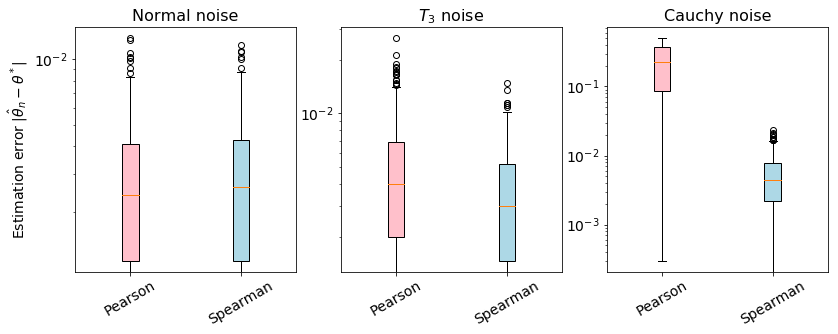}
\caption{Box plot of estimation error $|\hat{\theta}_n-\theta^*|$ for Template $A$}
\label{Fig-2}
\end{figure}

\begin{figure}[htbp]
\centering 
\includegraphics[width=0.8\textwidth]{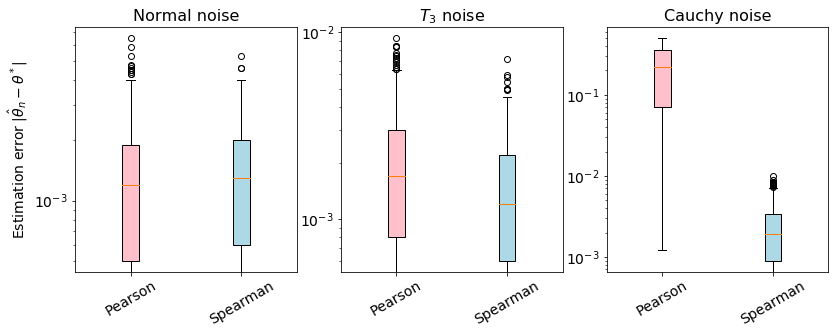}
\caption{Box plot of estimation error $|\hat{\theta}_n-\theta^*|$ for Template $B$}
\label{Fig-3}
\end{figure}

\begin{figure}[htbp]
\centering 
\includegraphics[width=0.8\textwidth]{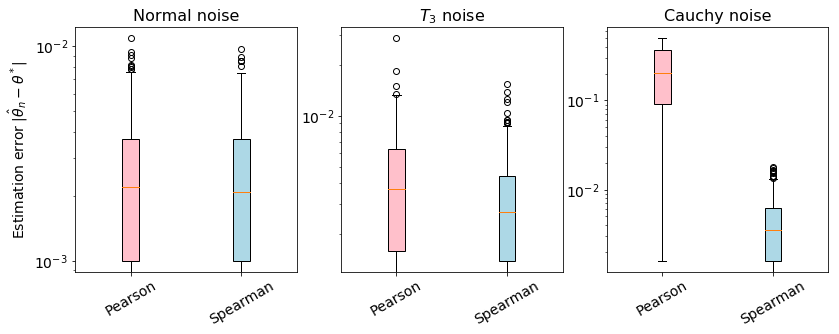}
\caption{Box plot of estimation error $|\hat{\theta}_n-\theta^*|$ for Template $C$}
\label{Fig-4}
\end{figure}

\begin{figure}[htbp]
\centering 
\subfigure[Normal noise]{
\label{Fig-5-1}
\includegraphics[width=0.32\textwidth]{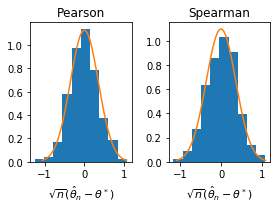}}
\subfigure[$T_3$ noise]{
\label{Fig-5-2}
\includegraphics[width=0.32\textwidth]{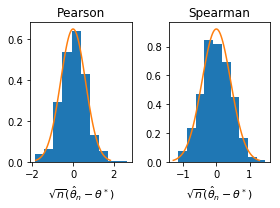}}
\subfigure[Cauchy noise]{
\label{Fig-5-3}
\includegraphics[width=0.32\textwidth]{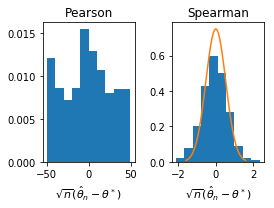}}
\caption{Distribution under Template $A$. The histogram presents the distribution of $\sqrt{n}(\hat{\theta}_n-\theta^*)$. The orange bell-shaped curve is the density of normal distribution predicted by the theory.}
\label{Fig-5}
\end{figure}

\begin{figure}[htbp]
\centering 
\subfigure[Normal noise]{
\label{Fig-6-1}
\includegraphics[width=0.32\textwidth]{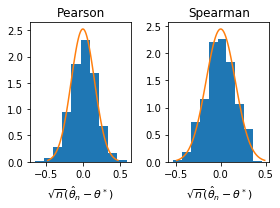}}
\subfigure[$T_3$ noise]{
\label{Fig-6-2}
\includegraphics[width=0.32\textwidth]{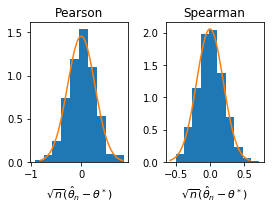}}
\subfigure[Cauchy noise]{
\label{Fig-6-3}
\includegraphics[width=0.32\textwidth]{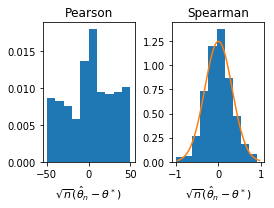}}
\caption{Distribution under Template $B$. The histogram presents the distribution of $\sqrt{n}(\hat{\theta}_n-\theta^*)$. The orange bell-shaped curve is the density of normal distribution predicted by the theory.}
\label{Fig-6}
\end{figure}

\begin{figure}[htbp]
\centering 
\subfigure[Normal noise]{
\label{Fig-7-1}
\includegraphics[width=0.32\textwidth]{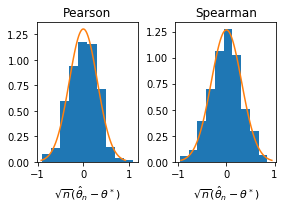}}
\subfigure[$T_3$ noise]{
\label{Fig-7-2}
\includegraphics[width=0.32\textwidth]{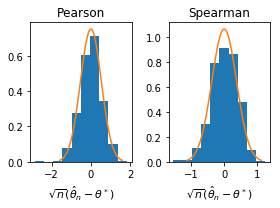}}
\subfigure[Cauchy noise]{
\label{Fig-7-3}
\includegraphics[width=0.32\textwidth]{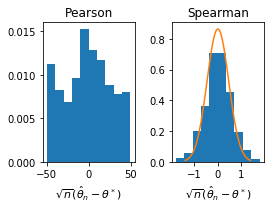}}
\caption{Distribution under Template $C$. The histogram presents the distribution of $\sqrt{n}(\hat{\theta}_n-\theta^*)$. The orange bell-shaped curve is the density of normal distribution predicted by the theory.}
\label{Fig-7}
\end{figure}

\section{Discussion}
\label{sec:discussion}

Our main goal in this paper was to show that a standard rank-based approach to template matching is viable and amenable to study using well-established techniques in mathematical statistics --- some basic results in empirical process theory and the projection method of H\'ajek. This provides some theoretical foundation for related approaches proposed in recent years in the signal processing literature.
We chose to keep the exposition contained and, in particular, have focused on the `smooth setting' of a Lipschitz template. We leave the equally important case of a discontinuous template for future work (likely by others). Based on our previous work \citep{arias2020template} --- where we did study this case in detail --- and on our work here --- in particular H\'ajek's projection technique used in the proof of \thmref{normal} --- we do believe that the study of this case is within the reach of similarly standard tools.

Some of the other extensions discussed in our previous work are also relevant here. In particular, while we focused on shifts in the context of 1D signals, other settings are possible, including shifts in 2D or 3D signals (i.e., images), as well as other transformations. We omit details and simply affirm that such extensions are also amenable to a similar mathematical analysis.

In signal processing, rank-based methods seem more prominently represented in the literature on signal registration. We are confident that the study of such methods is well within the range of established techniques in mathematical statistics. However, the situation becomes substantially more complex as 1) the setting is semi-parametric, and 2) some smoothing seems to be required to achieve good performance --- as transpires from the statistics literature on the topic as mentioned in our previous work \citep{arias2020template}. We leave a further exploration of rank-based methods for registration to future endeavors.

\section{Proofs}
\label{sec:proofs}

\subsection{Preliminaries}
The following is an extension of the celebrated Glivenko--Cantelli theorem.
\begin{lem}
\label{lem:GC}
Suppose that $\{Y_{i,n} : i \in [n], n \ge 1\}$ are independent random variables with uniformly tight and equicontinuous distributions $\{F_{i,n} : i \in [n], n \ge 1\}$.
Define 
\begin{align*}
\hat F_{[n]}(y) := \frac1n \sum_{i=1}^n \IND{Y_{i,n} \le y}, && F_{[n]} := \E[\hat F_{[n]}] = \frac1n \sum_{i=1}^n F_{i,n}.
\end{align*}
Then the following uniform convergence holds in probability
\begin{align*}
\sup_t |\hat F_{[n]}(t) - F_{[n]}(t)| 
\xrightarrow{n\to\infty} 0.
\end{align*}
\end{lem}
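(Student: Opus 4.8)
The plan is to adapt the classical proof of the Glivenko–Cantelli theorem, which rests on two ingredients: pointwise control of $\hat F_{[n]} - F_{[n]}$ at finitely many points, and the monotonicity of both functions to interpolate between those points. The only new features here are that the summands are independent but not identically distributed, and that the centering function $F_{[n]}$ itself varies with $n$; the uniform tightness and equicontinuity hypotheses are precisely what is needed to make the interpolation uniform in $n$.

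First I would establish pointwise convergence. For fixed $t$, write $\hat F_{[n]}(t) - F_{[n]}(t) = \frac1n\sum_{i=1}^n (\IND{Y_{i,n}\le t} - F_{i,n}(t))$, a sum of independent, mean-zero, bounded terms. Since each indicator has variance at most $1/4$, the variance of this average is at most $1/(4n)$, so Chebyshev's inequality gives $\hat F_{[n]}(t) - F_{[n]}(t) \to 0$ in probability (indeed in $L^2$) for every fixed $t$, with a rate independent of $t$ and of the particular distributions $F_{i,n}$.

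Next I would build a single finite grid that works for all $n$. By uniform tightness, for any $\epsilon > 0$ there is $M$ with $F_{i,n}(-M)\le\epsilon$ and $1 - F_{i,n}(M)\le\epsilon$ for all $i,n$, hence the same bounds for the averages $F_{[n]}$. By (uniform) equicontinuity there is $\delta > 0$ such that every $F_{i,n}$, and therefore every $F_{[n]}$, oscillates by at most $\epsilon$ on any interval of length $\delta$; in particular each $F_{[n]}$ is continuous. Partitioning $[-M,M]$ into subintervals of length at most $\delta$ produces finitely many points $t_1 < \cdots < t_K$ (not depending on $n$) with $F_{[n]}(t_{j}) - F_{[n]}(t_{j-1}) \le \epsilon$ for all $n$, together with $F_{[n]}(t_1)\le\epsilon$ and $1 - F_{[n]}(t_K)\le\epsilon$. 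For any $t$, locating it in the appropriate subinterval (or in a tail) and using that both $\hat F_{[n]}$ and $F_{[n]}$ are nondecreasing, the usual sandwich bound yields
\[
\sup_t \big|\hat F_{[n]}(t) - F_{[n]}(t)\big| \;\le\; \max_{1\le j\le K} \big|\hat F_{[n]}(t_j) - F_{[n]}(t_j)\big| + \epsilon .
\]
Since the grid is finite and fixed, the maximum on the right tends to $0$ in probability by the pointwise step, whence $\limsup_n \Pr\big(\sup_t|\hat F_{[n]} - F_{[n]}| > 2\epsilon\big) = 0$; letting $\epsilon \downarrow 0$ completes the argument.

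The main obstacle is the uniformity in $n$: because both the empirical process and its centering move with $n$, one cannot simply invoke a fixed limiting distribution function and appeal to the ordinary Glivenko–Cantelli theorem. The entire difficulty is therefore concentrated in choosing the grid — the number and location of the points $t_j$ and the truncation level $M$ — \emph{independently of $n$}, and this is exactly what uniform tightness (to cap the tails) and uniform equicontinuity (to cap the oscillation between consecutive grid points) deliver. Once the grid is fixed, everything else reduces to the finite-dimensional Chebyshev estimate from the first step.
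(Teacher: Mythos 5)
Your proposal is correct and follows essentially the same route as the paper's own proof: Chebyshev for pointwise convergence at finitely many grid points, with uniform tightness and equicontinuity used to choose a truncation level and mesh independent of $n$, and monotonicity to interpolate. The only difference is cosmetic (you present the pointwise step before the grid construction, the paper after), so there is nothing to add.
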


\begin{proof}
The proof arguments are very close to those supporting the more classical situation in which the random variables have the same distribution \cite[Th 19.1]{van2000asymptotic}. We provide a proof for completeness only.

Our assumptions mean 1) that for each $\eps>0$ there is $K$ such that $\inf_i F_{i,n}(K) \ge 1-\eps$ and $\sup_i F_{i,n}(-K) \le \eps$; and 2) that $\omega(\delta) := \sup_i \sup_t (F_{i,n}(t+\delta)-F_{i,n}(t))$ is continuous at 0; we then speak of $\omega$ as the modulus of continuity of the family $\{F_{i,n}\}$.
Fix $\eps > 0$ and let $K$ be defined as above.
Also, let $\delta$ be such that $\omega(\delta) \le \eps$ and set $-K = t_1 < t_2 < \dots < t_{m-1} < t_m = K$ be such that $t_{j+1} -t_j \le \delta$ for all $j$.
We then have, regardless of $n$, that $F_{[n]}(K) \ge 1-\eps$ and $F_{[n]}(-K) \le \eps$; and that $\omega$ is a modulus of continuity for $F_{[n]}$, meaning that $\sup_t (F_{[n]}(t+\delta)-F_{[n]}(t)) \le \omega(\delta)$.
\begin{itemize}
\item For $t \le t_1$, we have
\begin{align*}
|\hat F_{[n]}(t) - F_{[n]}(t)| 
\le \hat F_{[n]}(t) + F_{[n]}(t)
\le \hat F_{[n]}(t_1) + \eps.
\end{align*}
\item For $t \ge t_m$, we have
\begin{align*}
|\hat F_{[n]}(t) - F_{[n]}(t)| 
\le (1-\hat F_{[n]}(t)) + (1-F_{[n]}(t))
\le (1-\hat F_{[n]}(t_m)) + \eps.
\end{align*}
\item For $-K < t < K$, if $j$ is such that $t_j < t \le t_{j+1}$,
\begin{align*}
\hat F_{[n]}(t) - F_{[n]}(t)
\le \hat F_{[n]}(t_{j+1}) - F_{[n]}(t_j)
\le \hat F_{[n]}(t_{j+1}) - F_{[n]}(t_{j+1}) + \eps,
\end{align*}
and
\begin{align*}
\hat F_{[n]}(t) - F_{[n]}(t)
\ge \hat F_{[n]}(t_{j}) - F_{[n]}(t_{j+1})
\le \hat F_{[n]}(t_{j}) - F_{[n]}(t_{j}) - \eps.
\end{align*}
\end{itemize}
By Chebyshev's inequality, $\hat F_{[n]}(t) - F_{[n]}(t) \to 0$ in probability as $n\to\infty$ for every fixed $t \in \bbR$. In particular,  \smash{$\max_j |\hat F_{[n]}(t_{j})- F_{[n]}(t_{j})| \to 0$} in probability, and under the event that this maximum is bounded by $\eps$, we have
$|\hat F_{[n]}(t) - F_{[n]}(t)| \le 2\eps$. 
Since $\eps > 0$ is arbitrary, the proof is complete.
\end{proof}

The following is a simple result on functions defined as the linear combination of uniformly equicontinuous functions with random coefficients.
\begin{lem}
\label{lem:simple_EP}
Suppose that $\{B_{i,n} : i \in [n], n \ge 1\}$ are independent such that $|B_{i,n}| \le K$ and $\E[B_{i,n}] = 0$ for all $i \in [n]$ and all $n \ge 1$; and that $\{f_{i,n}: i \in [n], n \ge 1\}$ are uniformly bounded and uniformly equicontinuous functions either defined on a compact interval. Then, in probability,
\begin{align*}
\Big|\frac1n \sum_{i = 1}^n B_{i,n} f_{i,n}\Big|_\infty 
\xrightarrow{n\to\infty} 0.
\end{align*}
\end{lem}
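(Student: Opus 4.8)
The plan is to follow the standard two-step route to uniform convergence of a random process over a compact set: first control the process at each fixed point, then upgrade this pointwise control to uniform control by exploiting equicontinuity together with a finite grid. Throughout, write $G_n(t) := \frac1n \sum_{i=1}^n B_{i,n} f_{i,n}(t)$ for the random function in question, let $[a,b]$ denote the common compact interval, and let $C$ be the uniform bound, so that $|f_{i,n}(t)| \le C$ for all $i, n, t$.

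First I would establish pointwise convergence. Fix $t \in [a,b]$. Since the $B_{i,n}$ are independent, centered, and bounded by $K$, the variable $G_n(t)$ has mean zero and variance $\frac1{n^2}\sum_{i=1}^n \E[B_{i,n}^2]\, f_{i,n}(t)^2 \le K^2 C^2/n$, so Chebyshev's inequality gives $G_n(t) \to 0$ in probability. The same then holds for the maximum over any fixed finite set of points, a finite maximum of quantities each tending to zero in probability tending to zero in probability as well.

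The key step is to convert pointwise convergence into uniform convergence, and here I would exploit that the deterministic bound $|B_{i,n}| \le K$ makes $G_n$ itself equicontinuous, with a modulus independent of $n$ and of the realization: for any $s, t$, the triangle inequality gives $|G_n(s) - G_n(t)| \le \frac1n \sum_i |B_{i,n}|\,|f_{i,n}(s) - f_{i,n}(t)| \le K\,\omega(|s-t|)$, where $\omega(\delta) := \sup_{i,n}\sup_{|s-t|\le\delta}|f_{i,n}(s)-f_{i,n}(t)|$ tends to $0$ as $\delta \to 0$ by uniform equicontinuity. Given $\eps>0$, I would choose $\delta$ with $K\,\omega(\delta) \le \eps$ and partition $[a,b]$ into $a = t_1 < \cdots < t_m = b$ with consecutive gaps below $\delta$. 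For an arbitrary $t$, choosing the nearest grid point $t_j$ yields $|G_n(t)| \le \max_k |G_n(t_k)| + \eps$, hence $\big|G_n\big|_\infty \le \max_k |G_n(t_k)| + \eps$. The finite maximum tends to zero in probability by the previous step, so $\Pr\big(\big|G_n\big|_\infty > 2\eps\big) \to 0$; since $\eps$ was arbitrary, this is the claim.

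I do not expect a serious obstacle: this is a soft argument. The one point deserving care is the transfer of equicontinuity from the deterministic family $\{f_{i,n}\}$ to the random process $G_n$, with a modulus that is deterministic and uniform in $n$ and in the sample --- it is precisely the boundedness of the coefficients $B_{i,n}$ that makes this work and keeps the grid approximation error controlled uniformly. Everything else is a routine pairing of Chebyshev's inequality with a finite union bound, closely mirroring the discretization step already carried out in the proof of \lemref{GC}.
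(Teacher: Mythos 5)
Your proposal is correct and follows essentially the same route as the paper's proof: Chebyshev for pointwise (hence finite-grid) convergence, plus the deterministic bound $|B_{i,n}|\le K$ combined with uniform equicontinuity of the $f_{i,n}$ to control the process between grid points. The only difference is cosmetic (you absorb the factor $K$ into the choice of $\delta$, while the paper carries $K\eps$ through to the end).
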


\begin{proof}
The arguments are quite similar to those supporting \lemref{GC}.
Define 
\begin{align*}
\label{Sn}
S_n := \frac1n \sum_{i = 1}^n B_{i,n} f_{i,n}.
\end{align*}
Suppose without loss of generality that the functions are defined on the unit interval. Because they are uniformly equicontinuous, for any given $\eps > 0$, there is $\delta > 0$ such that $\sup_n \sup_i |f_{i,n}(t) - f_{i,n}(s)| \le \eps$ for all $s, t \in [0,1]$ such that $|t-s| \le \delta$. With $\eps > 0$ fixed, and $\delta$ as such, let $0 = t_1 < t_2 < \cdots < t_{m-1} < t_m = 1$ be such that $t_{j+1} - t_j \le \delta$ for all $j$.
Then for any $t \in [0,1]$, if $j$ is such that $t_j \le t \le t_{j+1}$,
\begin{align*}
|S_n(t) - S_n(t_j)| 
\le \frac1n \sum_{i = 1}^n |B_{i,n}| |f_{i,n}(t) - f_{i,n}(t_j)|
\le K \sup_n \sup_i |f_{i,n}(t) - f_{i,n}(t_j)|
\le K \eps.
\end{align*}
In particular,
\begin{align*}
|S_n|_\infty \le \max_j |S_n(t_j)| + K \eps.
\end{align*}
Furthermore, by Chebyshev's inequality, in probability,
\begin{align*}
\max_j |S_n(t_j)| \xrightarrow{n\to\infty} 0.
\end{align*}
Hence, in probability,
\begin{align*}
\limsup_n |S_n|_\infty \le K \eps.
\end{align*}
Since $\eps>0$ was chosen arbitrary, the proof is complete.
\end{proof}

The following is a well-known error bound for Riemann sums.
\begin{lem}
\label{lem:riemann}
Assume that $f:[a,b] \to \bbR$ is continuous with modulus of continuity $\omega$. Then 
\begin{align*}
\left|\int_a^b f(u) \d u - \frac1m \sum_{j=1}^m f(a + j (b-a)/m)\right|
&\le (b-a) \omega((b-a)/m) \\
&\le \frac{(b-a)^2}{m}\, |f'|_\infty \quad \text{if $f$ is Lipschitz.}
\end{align*}
\end{lem}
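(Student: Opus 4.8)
The plan is to reduce everything to a cellwise comparison on the uniform partition of $[a,b]$ into $m$ congruent subintervals, exploiting the fact that the $m$ sample points $a + j(b-a)/m$, $j = 1, \dots, m$, are precisely the right endpoints of these cells. First I would set $h := (b-a)/m$ and $I_j := [a + (j-1)h,\, a + jh]$, so that $\{I_j\}_{j=1}^m$ partitions $[a,b]$ and each $I_j$ has length $h$ with right endpoint $a + jh$. I would then split the integral cellwise and recognize the approximating sum as $\sum_{j=1}^m |I_j|\, f(a+jh) = h\sum_{j=1}^m f(a+jh)$, each node carrying the weight $|I_j| = h$ of its own cell. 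The error then telescopes into a sum of $m$ local errors,
\[
\int_a^b f(u)\,\d u - h\sum_{j=1}^m f(a+jh) = \sum_{j=1}^m \int_{I_j}\big(f(u) - f(a+jh)\big)\,\d u,
\]
which isolates, on each cell, the discrepancy between $f$ and its value at the sampled endpoint.

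Next I would bound each local error by the modulus of continuity. For $u \in I_j$ the distance to the sample point obeys $|u - (a+jh)| \le h$ (this is exactly where the alignment of the nodes with the cell endpoints matters: it keeps the within-cell distance at $h$ rather than $2h$). Since $\omega$ may be taken nondecreasing, this gives $|f(u) - f(a+jh)| \le \omega\big(|u-(a+jh)|\big) \le \omega(h)$, and integrating over $I_j$ yields $\big|\int_{I_j}(f(u) - f(a+jh))\,\d u\big| \le |I_j|\,\omega(h) = h\,\omega(h)$. Summing over the $m$ cells and using $mh = b-a$ produces the first claimed bound $(b-a)\,\omega(h) = (b-a)\,\omega((b-a)/m)$.

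Finally, for the Lipschitz specialization I would simply insert the estimate $\omega(\delta) \le |f'|_\infty\,\delta$ valid for a Lipschitz function, evaluated at $\delta = h$: this gives $\omega(h) \le |f'|_\infty\, h$, and hence $(b-a)\,\omega(h) \le (b-a)\,|f'|_\infty\, h = (b-a)^2 |f'|_\infty / m$, which is the second bound. I do not anticipate any genuine obstacle here, as the argument is entirely routine; the only two points requiring a moment of care are (i) that the sample points coincide with the cell endpoints, so that $\sup_{u \in I_j}|u - (a+jh)| = h$, and (ii) that the modulus of continuity is monotone, so that $\omega$ of a distance at most $h$ is controlled by $\omega(h)$.
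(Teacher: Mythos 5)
Your proof is correct and follows exactly the route the paper indicates: partition $[a,b]$ into $m$ subintervals of length $(b-a)/m$, compare $f$ on each cell to its value at the right endpoint via the modulus of continuity, and sum. The paper gives only a one-line sketch of this same argument, so your write-up is simply a fully detailed version of it.
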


\begin{proof}
This well-known result is a simple consequence of partitioning $[a,b]$ into sub-intervals of length $(b-a)/m$.
\end{proof}

The next two lemmas are refinements of \lemref{GC} and \lemref{simple_EP}. They clearly subsume them, but they are also much deeper, and we only provide proof sketches, relying on arguments borrowed from \citep{van2000asymptotic}.

\begin{lem}
\label{lem:KS}
In the context of \lemref{GC}, for some constant $C$,
\begin{align*}
\E\Big[\sup_t |\hat F_{[n]}(t) - F_{[n]}(t)|\Big] 
\le C/\sqrt{n}.
\end{align*}
\end{lem}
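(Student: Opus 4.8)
The plan is to read $\sqrt{n}\,(\hat F_{[n]} - F_{[n]})$ as an empirical process indexed by the class of indicators of lower half-lines, $\mathcal{C} := \{\IND{\cdot \le t} : t \in \bbR\}$, and to exploit that $\mathcal{C}$ is a Vapnik--Chervonenkis (VC) class to bound the expected supremum by a universal constant, uniformly over the non-identical marginals $F_{i,n}$. Writing
\[
\hat F_{[n]}(t) - F_{[n]}(t) = \frac1n \sum_{i=1}^n \big(\IND{Y_{i,n} \le t} - F_{i,n}(t)\big),
\]
the summands are independent and centered, so the first step is to symmetrize. Introducing iid Rademacher signs $\eps_1, \dots, \eps_n$ independent of the data, the symmetrization inequality for independent (not necessarily identically distributed) variables — whose proof uses only independence, centering, and an independent ghost sample with matching marginals — yields
\[
\E\Big[\sup_t \big|\hat F_{[n]}(t) - F_{[n]}(t)\big|\Big]
\le \frac2n\, \E\Big[\sup_t \Big|\sum_{i=1}^n \eps_i \IND{Y_{i,n} \le t}\Big|\Big].
\]

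The second step is to bound the symmetrized process by conditioning on the data $Y_{1,n}, \dots, Y_{n,n}$ and applying a chaining argument. Conditionally, $t \mapsto \sum_i \eps_i \IND{Y_{i,n} \le t}$ is sub-Gaussian with respect to the $L_2$ metric induced by the empirical measure $\mathbb{P}_n := \tfrac1n \sum_{i=1}^n \delta_{Y_{i,n}}$, so Dudley's entropy integral bounds its conditional expected supremum by a multiple of $\sqrt{n}\int_0^1 \sqrt{\log N(\eps, \mathcal{C}, L_2(\mathbb{P}_n))}\,\d\eps$. The decisive fact is that $\mathcal{C}$ has VC index $2$, so the polynomial covering-number bound for VC classes gives $N(\eps, \mathcal{C}, L_2(Q)) \le (C'/\eps)^{W}$ uniformly over \emph{every} probability measure $Q$ \cite[Ch.~19]{van2000asymptotic}, in particular over the random measure $\mathbb{P}_n$. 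Hence the entropy integral is bounded by a universal constant irrespective of the data, and taking expectations over $Y_{1,n}, \dots, Y_{n,n}$ gives
\[
\frac2n\, \E\Big[\sup_t \Big|\sum_{i=1}^n \eps_i \IND{Y_{i,n} \le t}\Big|\Big] \le \frac{C}{\sqrt{n}},
\]
which is the claim.

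The main obstacle — and the reason this does not follow verbatim from the iid theory in \cite{van2000asymptotic} — is checking that each step survives the passage from identically distributed to merely independent variables. Symmetrization is unaffected, as it uses only independence and centering against the average $F_{[n]} = \tfrac1n\sum_i F_{i,n}$. The entropy bound is likewise unaffected, and this is the crux: the VC covering-number estimate is \emph{distribution free}, holding for each fixed $Q$ and hence for $\mathbb{P}_n$ no matter how the $F_{i,n}$ differ, so the conditional sub-Gaussian chaining never invokes a common marginal law; the constant envelope $1$ trivially satisfies $\|\text{envelope}\|_{L_2(\mathbb{P}_n)} = 1$, so no integrability condition intervenes. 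It is worth noting that neither the tightness nor the equicontinuity hypotheses carried over from \lemref{GC} are actually needed for this $\sqrt{n}$-rate: they served only the qualitative (Glivenko--Cantelli) conclusion of \lemref{GC}, whereas here the VC property alone delivers the stronger quantitative bound.
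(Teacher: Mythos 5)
Your argument is correct, and it reaches the bound by a genuinely different route than the paper. The paper stays inside the bracketing framework: it invokes the maximal inequality \cite[Cor~19.35]{van2000asymptotic}, argues that its proof survives the passage from iid to merely independent observations because the only place the common law enters is Bernstein's inequality, and then bounds the bracketing integral $J(\mathcal{G}, F_{[n]})$ uniformly in $n$ for the half-line indicator class (with the equicontinuity of the $F_{i,n}$ offered as the general mechanism for controlling the modulus of continuity of $F_{[n]}$). You instead symmetrize and then chain conditionally on the data, using the distribution-free VC covering-number bound applied to the empirical measure $\mathbb{P}_n$. Your route buys transparency in exactly the place where care is needed: the two steps sensitive to non-identical distributions --- symmetrization against a ghost sample with matching marginals, and the uniform entropy bound over arbitrary $Q$ --- manifestly never use a common law, so you do not have to re-open the proof of a textbook maximal inequality to locate where iid-ness is used. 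You are also right that the tightness and equicontinuity hypotheses of \lemref{GC} are not needed for the $\sqrt{n}$ rate; the same is in fact true of the paper's argument for this particular class, since the bracketing numbers of half-line indicators admit a bound of order $\varepsilon^{-2}$ uniformly over all distribution functions, but the paper leans on the modulus of continuity as its stated mechanism. What the paper's route buys in exchange is economy: it reuses the entropy-with-bracketing machinery already deployed elsewhere and avoids introducing Rademacher symmetrization. One cosmetic point: the polynomial exponent in the VC covering bound is of order $2(W-1)$ rather than $W$, but this is immaterial since any polynomial covering bound makes the entropy integral converge.
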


\begin{proof}
The result is classical when the variables are not only independent, but also identically distributed, say $Y_{i,n} \sim F$ for all $i$ and all $n$, and is a special case of so-called entropy bounds on the supremum of an empirical processes of the form
\begin{equation}
S_n := \sup_{g \in \cG} \frac1{\sqrt{n}} \sum_{i=1}^n (g(Y_{i,n}) - \E[g(Y_{i,n})]).
\end{equation}
For example, assuming that the class $\cG$ is uniformly bounded, Cor~19.35 in \citep{van2000asymptotic} gives
\begin{equation*}
\E[S_n] \le C_0 J(\cG, F),
\end{equation*}
where $C_0$ is a constant and 
\begin{equation*}
J(\cG, F) := \int_0^\infty \sqrt{\log N(\eps, \cG, L^2(F))} \d \eps,
\end{equation*}
$N(\eps, \cG, L^2(F))$ denoting the $\eps$-bracketing number of the class $\cG$ with respect to the $L^2(F)$ metric.\footnote{Two functions $g_1, g_2$ such that $g_1 \le g_2$ pointwise define a bracket made of all functions $g$ such that $g_1 \le g \le g_2$. It is said to be an $\eps$-bracket with respect to $L^2(F)$, for a positive measure $F$, if $\int (g_2 - g_1)^2 \d F \le \eps^2$. 
Given a class of functions $\cG$, its $\eps$-bracketing number with respect to $L^2(F)$ is the minimum number of $\eps$-brackets needed to cover $\cG$.}

The proof of that result takes several pages, but a close examination reveals that the `identically distributed' property is not used in an essential way. Indeed, the assumption that the variables are iid is only used when applying Bernstein's concentration inequality (Lem~19.32 there), and it is well-known that the result applies in a generalized form to variables that are only independent, say $Y_{i,n} \sim F_{i,n}$. Everything follows from that, essentially verbatim, and yields
\begin{equation*}
\E[S_n] \le C_0 J(\cG, F_{[n]}).
\end{equation*}
It turns out that, for a given distribution function $F$, $J(\cG, F)$ can be bounded based on the modulus of continuity of $F$ (see Ex~19.6 in the same reference). 
And if $\omega$ is the modulus of continuity of $\{F_{i,n}\}$, then it is also a modulus of continuity for $F_{[n]}$, and with this we can bound $J(\cG, F_{[n]})$ independently of $n$ just based on $\omega$.

When dealing with the empirical distribution function, which is our focus here, the class is  taken to be $\cG := \{\IND{y \le t} : t \in \bbR\}$. For that class, $J(\cG, F) < \infty$ for any distribution function, and this implies via the arguments above that $\sup_n J(\cG, F_{[n]}) < \infty$, concluding the proof.
\end{proof}

\begin{lem}
\label{lem:simple_EP_bound}
Suppose that $\{B_i : i \ge 1\}$ are independent random variables that are centered and bounded in absolute value by $K$. And let $\{f_i : i \ge 1\}$ be $L$-Lipschitz functions on $[-t_0,t_0]$ with $f_i(0) = 0$ for all $i$. Then there is a some constant $C$ such that, for any $n \ge 1$,
\begin{align*}
\E\bigg[\sup_{|t| \le t_0} \Big|\frac1n \sum_{i = 1}^n B_{i} f_{i}(t)\Big|\bigg] 
\le \frac{C t_0}{\sqrt{n}}.
\end{align*}
\end{lem}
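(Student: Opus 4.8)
The plan is to treat $S_n(t) := \frac1n \sum_{i=1}^n B_i f_i(t)$ as a stochastic process indexed by $t \in [-t_0,t_0]$ and to control its supremum by a chaining (Dudley entropy integral) argument, just as in the maximal inequalities of \citep[Sec.~19]{van2000asymptotic}. The starting observation is that $S_n(0) = 0$, because $f_i(0) = 0$ for all $i$; hence $\sup_{|t|\le t_0}|S_n(t)| = \sup_{|t|\le t_0}|S_n(t) - S_n(0)|$, and it suffices to bound the increments of the process measured from the origin.

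First I would establish a sub-Gaussian bound on the increments. For fixed $s,t$, the quantity $n(S_n(t) - S_n(s)) = \sum_i B_i(f_i(t) - f_i(s))$ is a sum of independent, centered random variables, each bounded in absolute value by $K|f_i(t) - f_i(s)| \le KL|t-s|$ by the Lipschitz assumption. Hoeffding's inequality then gives, for all $u > 0$,
\[
\Pr\big(|S_n(t) - S_n(s)| > u\big) \le 2\exp\Big(-\frac{n u^2}{2 K^2 L^2 (t-s)^2}\Big),
\]
which is equivalent to the Orlicz-norm bound $\|S_n(t) - S_n(s)\|_{\psi_2} \le \frac{C_1 KL}{\sqrt n}|t-s|$. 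In other words, the process has sub-Gaussian increments with respect to the semimetric $d(s,t) := \frac{C_1 KL}{\sqrt n}|t-s|$.

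Next I would bound the metric entropy of the index set under $d$. Since $d$ is merely a rescaling of the Euclidean metric, covering $[-t_0,t_0]$ by $d$-balls of radius $\eps$ amounts to covering an interval of length $2t_0$ by Euclidean intervals of length $\eps\sqrt n/(C_1 KL)$, so the covering number satisfies $N(\eps, [-t_0,t_0], d) \le 1 + \frac{2 C_1 K L t_0}{\eps\sqrt n}$, while the $d$-diameter of the index set is $D := 2 C_1 KL t_0/\sqrt n$. Applying the maximal inequality for processes with sub-Gaussian increments (Dudley's entropy integral; see \citep[Sec.~19]{van2000asymptotic}),
\[
\E\Big[\sup_{|t|\le t_0}|S_n(t)|\Big] \le C_2 \int_0^D \sqrt{\log N(\eps, [-t_0,t_0], d)}\, \d\eps.
\]
After the substitution $\eps = D v$ this integral becomes $D \int_0^1 \sqrt{\log(1 + 1/v)}\, \d v$, and the remaining integral is a finite absolute constant because $\sqrt{\log(1/v)}$ is integrable near $0$. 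Hence the bound is a constant multiple of $D = 2 C_1 KL t_0/\sqrt n$, giving $\E[\sup_{|t|\le t_0}|S_n(t)|] \le C t_0/\sqrt n$ with $C$ depending only on $K$ and $L$, as claimed.

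The main obstacle — and the reason the naive argument used for \lemref{simple_EP} does not suffice here — is that a single-scale discretization of $[-t_0,t_0]$ into about $\sqrt n$ points would control the maximum over the grid only at the cost of a stray $\sqrt{\log n}$ factor (from taking a maximum of $\sim\sqrt n$ sub-Gaussian variables), which is incompatible with the clean $t_0/\sqrt n$ rate. Removing this logarithmic factor is exactly what the full chaining argument accomplishes: the integrability of $\sqrt{\log(1/\eps)}$ at $\eps = 0$ is precisely what prevents the entropy integral from accumulating logarithms. The only genuinely delicate point is thus the correct bookkeeping of $K$, $L$, $t_0$ and $n$ through the increment bound and the entropy integral, traced above; everything else is a direct application of the cited maximal inequality.
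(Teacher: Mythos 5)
Your proof is correct and follows essentially the same route as the paper's: both establish sub-Gaussian increments of $S_n$ with respect to the rescaled Euclidean metric $d(s,t)\asymp (KL/\sqrt{n})|s-t|$ (the paper via the bounded-increments criterion in Gin\'e--Nickl, you via Hoeffding), and then invoke Dudley's entropy integral over $[-t_0,t_0]$, using $S_n(0)=0$ to anchor the process and a change of variables to extract the diameter $D\asymp KLt_0/\sqrt{n}$. Your closing remark about why a single-scale discretization would leak a $\sqrt{\log n}$ factor is a correct and useful observation, though not part of the paper's argument.
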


\begin{proof}
As in the proof of \lemref{KS}, we rely on entropy bounds. Here we use Dudley's entropy bound as presented in \cite[Th 2.3.6]{gine2016mathematical}. For a given $n$, let $S(t) := \frac1n \sum_{i = 1}^n B_{i} f_{i}(t)$, we have
\begin{equation}
S(t) - S(s) 
= \sum_{i = 1}^n \frac{B_i}n (f_{i}(t) - f_i(s)),
\end{equation}
with the variables $\frac{B_i}n (f_{i}(t) - f_i(s))$ being independent, centered, and bounded in absolute value by $(K/n) L |t-s|$. In \citep{gine2016mathematical}, by Eq~(3.8) and based on Def~2.3.5, the process $S(t)$ is sub-Gaussian on $[-t_0, t_0]$ with respect to the metric $\d(s,t) := (KL/\sqrt{n}) |s-t|$. Because $S(0) = 0$, Th~2.3.6 there gives that
\begin{align*}
\E\Big[\sup_{|t| \le t_0} |S(t)|\Big]
\le 4 \sqrt{2} \int_0^{D/2} \sqrt{\log(2 N(\eps))} \d \eps,
\end{align*}
where $D$ and $N(\eps)$ are the diameter and $\eps$-covering number of $[-t_0, t_0]$ with respect to $\d$. Immediately, $D = (KL/\sqrt{n}) (2t_0) = 2 KL t_0/\sqrt{n}$, and $N(\eps) \asymp (KL/\sqrt{n}) (t_0/\eps) \asymp D/\eps$. With a simple change of variable in the integral, this gives us
\begin{align*}
\E\Big[\sup_{|t| \le t_0} |S(t)|\Big]
\le C_1 KL t_0/\sqrt{n},
\end{align*}
for a universal constant $C_1$.
\end{proof}

\subsection{Proof of \lemref{uniform_convergence}}

We assume without loss of generality that $\theta^* = 0$.
We have 
\begin{align*}
R_i 
= \sum_{j=1}^n \IND{Y_j \le Y_i}
= n \hat\Psi_n(Y_i), \quad \text{where } \hat\Psi_n(y) := \frac1n \sum_{j=1}^n \IND{Y_j \le y}.
\end{align*}
Note that $\hat\Psi_n$ is the empirical distribution function of $Y_1, \dots, Y_n$. Although these are not iid, they are independent, and the Glivenko--Cantelli theorem applies to give that, in probability,
\begin{align*}
\sup_{y \in \bbR} |\hat\Psi_n(y) - \E[\hat\Psi_n(y)]| 
\xrightarrow{n\to\infty} 0.
\end{align*}
See \lemref{GC} for details.
Further, we have
\begin{align*}
\E[\hat\Psi_n(y)]
&= \frac1n \sum_{j=1}^n \P(Y_j \le y) \\
&= \frac1n \sum_{j=1}^n \P(Z_j \le y - f(x_j)) \\
&= \frac1n \sum_{j=1}^n \Phi(y-f(j/n)) \\
&\xrightarrow{n\to\infty} \Psi(y) := \int_0^1 \Phi(y-f(x)) \d x,
\end{align*}
where the convergence is by definition of the Riemann integral defining the limit. The convergence is in fact uniform in $y$. This comes from an application of \lemref{riemann} using with the fact that $x \mapsto \Phi(y-f(x))$ has derivative $f'(x) \phi(y-f(x))$, which has supremum norm bounded by $|f'|_\infty |\phi|_\infty < \infty$ (independent of $y$).
Hence, a simple application of the triangle inequality gives that, in probability,
\begin{align}
A_{1,n} := \sup_{y \in \bbR} |\hat\Psi_n(y) - \Psi(y)| 
\xrightarrow{n\to\infty} 0.
\end{align}
This is useful to us because $R_i = n \Psi(Y_i) \pm n A_{1,n}$, which then triggers
\begin{align*}
\widehat M_n(\theta)
= \frac1n \sum_{i=1}^n \Psi(Y_i) f(x_i-\theta) \pm A_{2,n},
\end{align*}
with $A_{2,n} := A_{1,n} |f|_\infty = o_P(1)$.
We may thus focus on the first term on the right-hand side.
We have
\begin{align*}
\frac1n \sum_{i=1}^n \Psi(Y_i) f(x_i-\theta) 
= \frac1n \sum_{i=1}^n \E[\Psi(Y_i)] f(x_i-\theta) + Q_n(\theta),
\end{align*}
with
\begin{align*}
Q_n(\theta) := \frac1n \sum_{i=1}^n (\Psi(Y_i)-\E[\Psi(Y_i)]) f(x_i-\theta).
\end{align*}
On the one hand, by a standard argument consisting in discretizing the values of $\theta$ and using the uniform continuity of $f$, we obtain
\begin{align*}
A_{3,n} := \sup_{\theta \in \bbR} |Q_n(\theta)| \xrightarrow{n\to\infty} 0,
\end{align*}
in probability. See \lemref{simple_EP} for details.
On the other hand, 
\begin{align*}
\frac1n \sum_{i=1}^n \E[\Psi(Y_i)] f(x_i-\theta)
&= \frac1n \sum_{i=1}^n \int_{-\infty}^\infty \Psi(z + f(i/n)) \phi(z) \d z \cdot f(i/n-\theta) \\
&\xrightarrow{n\to\infty} \int_0^1 \int_{-\infty}^\infty \Psi(z + f(x)) \phi(z) \d z \cdot f(x-\theta) \d x \\
&\qquad\qquad = \int_0^1 \int_0^1 \Phi_2(f(x)-f(t)) f(x-\theta) \d t \d x
= M(\theta),
\end{align*}
using again the definition of Riemann integral.
In fact the convergence is uniform in $\theta$, by an application of \lemref{riemann} to the function 
\begin{equation*}
g_\theta(x) := \int_{-\infty}^\infty \Psi(z + f(x)) \phi(z) \d z \cdot f(x-\theta),
\end{equation*}
whose derivative can be bounded independently of $\theta$ as follows:
\begin{align*}
|g_\theta'(x)|
&= \Big|\int_{-\infty}^\infty f'(x) \Psi'(z+f(x)) \phi(z) \d z \cdot f(x-\theta) + \int_{-\infty}^\infty \Psi(z + f(x)) \phi(z) \d z \cdot f'(x-\theta)\Big| \\
&\le |f'|_\infty |\Psi'|_\infty |f|_\infty + |\Psi|_\infty |f'|_\infty \\
&\le |f'|_\infty |\phi|_\infty |f|_\infty + |f'|_\infty,
\end{align*}
using the fact that $\phi$ is a density, that $\Psi$ is a distribution function, and that $\Psi'(y) = \int_0^1 \phi(y-f(x)) \d x$ is non-negative and bounded by $|\phi|_\infty$.
All combined, we can conclude that $\widehat M_n(\theta)$ indeed converges in probability as $n\to\infty$ to $M(\theta)$ uniformly in $\theta$.

\subsection{Proof of \prpref{local}}

Assume without loss of generality that $\theta^* = 0$.
Define
\begin{align*}
g(y) := \int_0^1 \Phi_2(y-f(x)) \d x,
\end{align*}
so that
\begin{align*}
M(\theta) = \int_0^1 g(f(t)) f(t-\theta) \d t.
\end{align*}
Note that $0\le g(y) \le 1$ for all $y$. 
When $f$ is Lipschitz, it is absolutely continuous with bounded derivative, so that by dominated convergence, $M$ is differentiable with derivative
\begin{align*}
M'(\theta) 
= - \int_0^1 g(f(t)) f'(t-\theta) \d t
= - \int_0^1 g(f(t+\theta)) f'(t) \d t.
\end{align*}
The reason we transferred $\theta$ to $g$ is to be able to differentiate again. Indeed, $g$ is also differentiable by dominated convergence, with derivative (recall that $\Phi' = \phi$)
\begin{align*}
g'(y) = \int_0^1 \int_{-\infty}^\infty \phi(z+y-f(x)) \phi(z) \d z \d x,
\end{align*}
which is bounded, so that $M'$ in its second form is also differentiable (by dominated convergence again), with derivative
\begin{align*}
M''(\theta)
= - \int_0^1 f'(t+\theta) g'(f(t+\theta)) f'(t) \d t.
\end{align*}
Therefore, $M''$ is twice differentiable (with bounded second derivative at that).

We now look at $\theta = 0$.
Let $G$ be the indeterminate integral of $g$.
We have
\begin{align*}
M'(0)
&= - \int_0^1 g(f(t)) f'(t) \d t \\
&= - [G(f(1)) - G(f(0))] 
= 0,
\end{align*}
by the fact that $f$ is 1-periodic.
We also have
\begin{align*}
M''(0)
&= - \int_0^1 g'(f(t)) f'(t)^2 \d t
\le 0,
\end{align*}
by the fact that $g'$ is non-negative (by simply looking at the integrand defining it, recalling that $\phi$ is a density). In fact the inequality is strict by our assumption on $\phi$, as it forces $g'$ to be strictly positive everywhere.

\subsection{Proof of \thmref{rate}}

When working with the raw responses $Y_1, \dots, Y_n$, the result can be proved using \cite[Th 5.52]{van2000asymptotic}. Since we work with the ranks $R_1,\dots,R_n$ instead, we elaborate, even though the core arguments are essentially the same. Assume without loss of generality that $\theta^* = 0$, so that we need to show that $\sqrt{n} \hat\theta_n$ is bounded in probability.


On the one hand, by definition, we have $\widehat M_n(\hat\theta_n) - \widehat M_n(0) \ge 0$. 
On the other hand, by consistency (since \thmref{consistent} applies), we have that $|\hat\theta_n|$ is small, and by the fact that $M$ is close to quadratic in the neighborhood of $0$ (\prpref{local}), we have that $M(\hat\theta_n)-M(0) \le -C_1 \hat\theta_n^2$ for some constant $C_1>0$.
Combined, these two observations yield
\begin{align*}
C_1 \hat\theta_n^2
\le \widehat M_n(\hat\theta_n) - M(\hat\theta_n) - (\widehat M_n(0) - M(0)),
\end{align*}
with probability tending to 1.

Let $f_i(\theta) := f(x_i -\theta) - f(x_i)$.
For any $\theta$, we have
\begin{align}
& \widehat M_n(\theta) - M(\theta) - (\widehat M_n(0) - M(0)) \\
&= \frac1n \sum_i \big(\hat\Psi_n(Y_i) - \E[\hat\Psi_n](Y_i)\big) f_i(\theta) \label{term1} \\
&\quad + \frac1n \sum_i \big(\E[\hat\Psi_n](Y_i) - \Psi(Y_i)\big) f_i(\theta) \label{term2} \\
&\quad + \frac1n \sum_i \big(\Psi(Y_i) - \E[\Psi(Y_i)]\big) f_i(\theta) \label{term3} \\
&\quad + \frac1n \sum_i \E[\hat\Psi(Y_i)] f_i(\theta) - (M(\theta) - M(0)). \label{term4}
\end{align}
We saw in the proof of \lemref{uniform_convergence} that the terms in \eqref{term2} and \eqref{term4} are Riemannian sums and at most of order  $O(1/n)$ uniformly in $\theta$ given that the $|f_i|_\infty \le 2 |f|_\infty$. (This is crude, but enough for our purposes here.)
For \eqref{term1}, we apply \lemref{KS} together with Markov's inequality to get that, in probability as $n\to\infty$, 
\begin{equation*}
\sup_{y \in \bbR} |\hat\Psi_n(y) - \E[\hat\Psi_n](y)| \le C_2/\sqrt{n}.
\end{equation*}
With this, and the fact that $|f_i(\theta)| \le |f'|_\infty |\theta|$, we have that the quantity in \eqref{term1} is bounded in absolute value by $(C_2/\sqrt{n}) |f'|_\infty |\theta|$ for all $\theta$, that is, this term is $O(|\theta|/\sqrt{n})$ uniformly in $\theta$.
Hence, if $S_n(\theta)$ denotes the term in \eqref{term3}, we have with probability tending to~1,
\begin{align*}
C_1 \hat\theta_n^2 
\le S_n(\hat\theta_n) + C_3 (|\hat\theta_n|/\sqrt{n} + 1/n).
\end{align*}

Let $C_4 > 0$ be such that $C_1 \theta^2 - C_3 (|\theta|/\sqrt{n} + 1/n) \ge \theta^2/C_4$ whenever $|\theta| \ge C_4/\sqrt{n}$, so that $S_n(\hat\theta_n) \ge \hat\theta_n^2/C_4$ when $\sqrt{n} |\hat\theta_n| \ge C_4$.
Let $J_0$ be the smallest integer such that $2^{J_0} \ge C_4$.
Then, for $J \ge J_0$, we have
\begin{align*}
\P(\sqrt{n} |\hat\theta_n| \ge 2^J)
&= \sum_{j \ge J} \P(2^j < \sqrt{n} |\hat\theta_n| \le 2^{j+1}) \\
&\le \sum_{j \ge J} \P\Big(\max_{\sqrt{n} |\theta| \le 2^{j+1}} S_n(\theta) \ge (2^j/\sqrt{n})^2/C_4\Big) \\
&\le \sum_{j \ge J} \frac{(C_5/\sqrt{n}) (2^{j+1}/\sqrt{n})}{(2^j/\sqrt{n})^2/C_4} \\
&= C_6 2^{-J} \xrightarrow{J\to\infty} 0,
\end{align*}
where $C_5$ is the constant of \lemref{simple_EP_bound}, and we used that lemma and Markov's inequality in the corresponding line.
We can thus conclude that $\sqrt{n} \hat\theta_n$ is bounded in probability.

\subsection{Proof of \thmref{normal}}

We assume without loss of generality that $\theta^* = 0$.
The derivation of the limiting distribution of the R-estimator follows via an application of the so-called argmax theorem. This standard route is described, for example, in \cite[Sec 5.9]{van2000asymptotic}. 
It goes like this. By a simple change of variables and by definition of $\hat\theta_n$, $h_n := \sqrt{n} \hat\theta_n$ maximizes 
\begin{equation*}
W_n(h) := r_n \big[\widehat M_n(h/\sqrt{n}) - \widehat M_n(0)\big].
\end{equation*}
This is true for any $r_n > 0$ and, with probability tending to one according to \thmref{rate}, it is true even if $W_n$ is restricted to $[-a_n, a_n]$ for any given sequence $a_n\to\infty$. Suppose there is a choice of $r_n$ that leads to the weak convergence of $W_n$ to $W$ in some appropriate sense, where $W$ has a unique maximizer. Then it is reasonable to anticipate that $h_n$ will converge to that maximizer in some way. This is indeed the case under some mild assumptions.
The following is a special case of \cite[Cor 5.58]{van2000asymptotic}.
\begin{lem}
\label{lem:argmax}
Suppose that a sequence of processes $W_{n}$ defined on $[-a_n,a_n]$ for some sequence $a_n \to \infty$ converges weakly in the uniform topology on every fixed compact interval to a process $W$ with continuous sample paths each having a unique maximum point $h^*$ (almost surely). If $h_n$ maximizes $W_n$, and $(h_n)$ is uniformly tight, then $h_n$ converges weakly to $h^*$.
\end{lem}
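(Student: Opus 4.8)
The plan is to obtain this as a special case of the general argmax theorem, \cite[Cor 5.58]{van2000asymptotic}, by matching hypotheses; for orientation I sketch the reasoning behind that result. The conceptual engine is the \emph{argmax continuous mapping theorem}: on a fixed compact interval, the functional sending a continuous function to the location of its maximum is continuous at every function whose maximum is attained at a single, well-separated point. Since $W$ has continuous sample paths with an almost surely unique maximizer $h^*$, its paths enjoy this property, so the whole argument reduces to an application of the extended continuous mapping theorem, supplemented by a tightness argument to cope with the non-compact parameter set.

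In more detail, I would first fix a compact interval $[-A,A]$ and record two facts. On the one hand, by hypothesis $W_n$ converges weakly to $W$ in the uniform topology on $[-A,A]$. On the other hand, since each path of $W$ is continuous and has a unique maximizer, on $[-A,A]$ that maximizer is automatically well-separated, i.e. $\sup_{|h - h^*| \ge \delta,\, |h| \le A} W(h) < W(h^*)$ almost surely for every $\delta > 0$. The argmax map is therefore continuous (almost surely under the law of $W$) at the paths of $W$, and the extended continuous mapping theorem yields that $\hat h_n^A := \argmax_{|h| \le A} W_n(h)$ converges weakly to the maximizer of $W$ restricted to $[-A,A]$.

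The delicate point, and the one I expect to be the main obstacle, is passing from the restricted maximizer $\hat h_n^A$ to the genuine maximizer $h_n$ of $W_n$ over the full, growing domain. This is exactly where the uniform tightness of $(h_n)$ enters: given $\eps > 0$, choose $A$ so large that $\P(|h_n| > A) \le \eps$ for all $n$; on the complementary event the unrestricted maximizer lies in $[-A,A]$ and hence coincides with $\hat h_n^A$, so $h_n$ and $\hat h_n^A$ agree with probability at least $1 - \eps$. Enlarging $A$ further also forces $h^*$ into $(-A,A)$ with probability at least $1-\eps$, so the restricted maximizer of $W$ agrees with $h^*$. Feeding these two $\eps$-approximations into a portmanteau argument and then letting $\eps \to 0$ upgrades the convergence of $\hat h_n^A$ to the desired weak convergence of $h_n$ to $h^*$. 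The care required lies in interchanging the $n \to \infty$ and $A \to \infty$ (equivalently $\eps \to 0$) limits so that the vanishing discrepancy does not disturb the weak limit; this is precisely the bookkeeping carried out in \cite[Cor 5.58]{van2000asymptotic}, and in the write-up I would simply verify the correspondence of hypotheses and invoke that result.
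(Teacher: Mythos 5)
Your proposal is correct and takes essentially the same route as the paper: the paper offers no independent proof of this lemma, simply noting that it is a special case of \cite[Cor 5.58]{van2000asymptotic}, which is exactly the result you invoke (your sketch of the argmax continuous mapping argument and the tightness step to pass from compact intervals to the growing domain is an accurate account of what that corollary does). No gap to report.
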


Back to our situation, we have established the tightness of $\{h_n\}$ in \thmref{rate}. It therefore remains to show that $W_n$ converges weakly to an appropriate stochastic process for a proper choice of $r_n$. We will see that $r_n := n$ is the correct choice (up to an arbitrary multiplicative factor) and that the limit process is a simple Gaussian process. In what follows, we let $a_n \to \infty$ slowly (e.g., $a_n = \log n$).

So far, we have worked with the ranks using rather elementary means, but now we turn to more sophisticated tools. Specifically, we use the projection method of H\'ajek. The following is a special case of \cite[Th~4.2]{hajek1968asymptotic} with some minor modifications. 

\begin{lem}
\label{lem:hajek}
Suppose $Y_1, \dots, Y_n$ are independent with respective distribution functions $F_1, \dots, F_n$. Define $M = \sum_i b_i R_i/n$, where $R_1, \dots, R_n$ are the respective ranks of $Y_1, \dots, Y_n$, and $b_1, \dots, b_n$ are reals. Then, for a universal constant $C$,
\begin{align*}
\E\Big[\Big(M - \mu - \sum_{i=1}^n V_i\Big)^2\Big] 
\le \frac{C}n \sum_{i=1}^n b_i^2, 
\end{align*}
where 
\begin{align*}
\mu := \sum_{i=1}^n b_i \frac1n \sum_{j=1}^n \int F_j(t) \d F_i(t),
&&
V_i := \frac1n \sum_{i=1}^n (b_j-b_i) \int \big[\IND{Y_i\le t} - F_i(t)\big] \d F_j(t).
\end{align*}
\end{lem}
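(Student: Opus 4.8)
The plan is to identify $\mu + \sum_i V_i$ with the Hájek projection of $M$ and then control the residual by a second-moment computation, exploiting the degeneracy of the surviving interaction terms. Since the noise in our application has a density we may assume the $F_i$ are continuous, so ties occur with probability zero. Writing $R_i = \sum_j \IND{Y_j \le Y_i}$ and peeling off the diagonal term $j = i$ (which equals $1$), we obtain
\begin{align*}
M = \frac1n \sum_i b_i + \frac1n \sum_{k \ne l} b_k \IND{Y_l \le Y_k},
\end{align*}
so that, up to an additive constant, $M$ is a (non-symmetric) V-statistic of degree two in the independent variables $Y_1, \dots, Y_n$ with kernel $g_{kl} := b_k \IND{Y_l \le Y_k}$ for $k \ne l$. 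First I would form the Hoeffding (ANOVA) decomposition of each summand,
\begin{align*}
g_{kl} = \E[g_{kl}] + \big(\E[g_{kl} \mid Y_k] - \E[g_{kl}]\big) + \big(\E[g_{kl} \mid Y_l] - \E[g_{kl}]\big) + g_{kl}^\circ,
\end{align*}
where the interaction remainder $g_{kl}^\circ$ is doubly centered: $\E[g_{kl}^\circ \mid Y_k] = \E[g_{kl}^\circ \mid Y_l] = 0$.

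Summing over $k \ne l$ and normalizing by $\tfrac1n$, the constant pieces reproduce $\mu$ up to the discrepancy created by the diagonal convention — the diagonal term $\P(Y_i \le Y_i) = 1$ in $\E[M]$ versus $\int F_i \d F_i = \tfrac12$ in $\mu$ — giving the deterministic gap $\E[M] - \mu = \tfrac1{2n}\sum_i b_i$. A direct evaluation of $\E[g_{kl}\mid Y_k]$ and $\E[g_{kl}\mid Y_l]$ in terms of the $F$'s shows that the random, $Y_i$-dependent part collected at index $i$ is precisely $V_i$ (the $j=i$ contribution to $V_i$ is killed by its coefficient $b_i - b_i = 0$). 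Hence $\mu + \sum_i V_i$ equals the Hájek projection $\hat M := \sum_i \E[M\mid Y_i] - (n-1)\E[M]$ shifted by the constant $\tfrac1{2n}\sum_i b_i$, and therefore
\begin{align*}
M - \mu - \sum_i V_i = \frac1n \sum_{k \ne l} g_{kl}^\circ + \frac1{2n}\sum_i b_i.
\end{align*}
By Cauchy–Schwarz the squared constant is at most $\tfrac1{4n}\sum_i b_i^2$, so it only remains to bound the mean square of the interaction sum.

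The crux is the variance of $\tfrac1n\sum_{k\ne l} g_{kl}^\circ$, and here the double centering does the work. If the index pairs $\{k,l\}$ and $\{k',l'\}$ differ, then some index appears in exactly one pair; conditioning on the shared index (if any) and using $\E[g_{kl}^\circ\mid Y_k]=\E[g_{kl}^\circ\mid Y_l]=0$ together with independence forces $\E[g_{kl}^\circ g_{k'l'}^\circ]=0$. Only the $O(n^2)$ terms with $\{k,l\}=\{k',l'\}$ (that is, $(k',l')\in\{(k,l),(l,k)\}$) survive. Since $|g_{kl}^\circ|$ is bounded by an absolute constant times $|b_k|$, each surviving term is $O(b_k^2 + b_l^2)$, whence
\begin{align*}
\E\Big[\Big(\tfrac1n\sum_{k\ne l} g_{kl}^\circ\Big)^2\Big] \le \frac{C'}{n^2}\sum_{k\ne l}\big(b_k^2 + b_l^2\big) \le \frac{C'}{n}\sum_i b_i^2 .
\end{align*}
Combining with the constant term via $(a+b)^2 \le 2a^2 + 2b^2$ yields $\E[(M-\mu-\sum_i V_i)^2] \le \frac{C}{n}\sum_i b_i^2$, as claimed. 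I expect the main obstacle to be the bookkeeping of the second paragraph — verifying that the first-order projections assemble exactly into the stated $V_i$ and tracking the $\tfrac12$-diagonal correction — rather than the variance bound, which is a routine consequence of degeneracy once the decomposition is in place; alternatively, one may simply invoke \citep[Th~4.2]{hajek1968asymptotic} and limit the work to this identification.
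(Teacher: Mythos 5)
Your proof is correct, but it takes a genuinely different route from the paper: the paper offers no proof of this lemma at all, simply declaring it a special case of Theorem~4.2 of H\'ajek (1968) with minor modifications, whereas you give a self-contained argument via the Hoeffding decomposition of the degree-two array $g_{kl} = b_k \IND{Y_l \le Y_k}$. Your bookkeeping checks out: with continuous $F_i$ one has $\int F_i \,\d F_j = 1 - \int F_j \,\d F_i$ and $\int \IND{Y_i \le t}\,\d F_j(t) = 1 - F_j(Y_i)$, so the $V_i$ of the statement (reading its inner sum as over $j$, which is clearly the intended indexing) equals $\tfrac1n \sum_{j}(b_i - b_j)\big(F_j(Y_i) - \E[F_j(Y_i)]\big)$, exactly the first-order projection terms attached to $Y_i$; the diagonal discrepancy $\E[M] - \mu = \tfrac1{2n}\sum_i b_i$ is right since $\int F_i\,\d F_i = \tfrac12$; and the orthogonality of the doubly-centered remainders $g_{kl}^\circ$ across distinct index pairs (conditioning on the shared variable, if any) leaves only the $O(n)$-per-row diagonal and transposed terms, each $O(b_k^2 + b_l^2)$ since $|g_{kl}^\circ| \le 4|b_k|$, giving the $\tfrac{C}{n}\sum_i b_i^2$ bound. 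What your approach buys is transparency and self-containment --- the reader sees exactly why the residual is $O(n^{-1/2})$ in $L^2$ (degeneracy of the second-order term) without consulting H\'ajek's paper --- at the cost of the mild extra hypothesis that the $F_i$ are continuous, which is harmless here because in the application $F_i = \Phi(\cdot - f(x_i))$ with $\Phi$ absolutely continuous, and is in any case implicit whenever one works with ranks free of ties. The paper's citation-only route buys brevity and defers all such measure-theoretic care to the reference.
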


This result thus provides an approximation of a linear combination of ranks (which are dependent) by a linear combination of independent random variables, and the latter is essentially ready for an application of a central limit theorem.
We apply it to
\begin{align*}
W_n(h)
= \sum_{i=1}^n f_i(h) \frac{R_i}n, \quad f_i(h) := f(x_i - h/\sqrt{n}) - f(x_i).
\end{align*}
Note that $f_i$ depends on $n$ and recall that $x_i = i/n$. 
In \lemref{hajek}, $b_i$ corresponds here to $f_i(h)$ and $F_i$ to $\Phi(\cdot - f(x_i))$.
Hence, $\mu$ in the lemma is given by
\begin{align*}
&\sum_{i=1}^n f_i(h) \frac1n \sum_{j=1}^n \int_{-\infty}^\infty \Phi(t - f(x_j)) \phi(t - f(x_i)) \d t \\
&= \sum_{i=1}^n f_i(h) \frac1n \sum_{j=1}^n \Phi_2(f(x_i) - f(x_j)) \\
&= \sum_{i=1}^n f_i(h) \bigg[\int_0^1 \Phi_2(f(x_i) - f(x)) \d x \pm \frac{|f'|_\infty}{2n} \bigg] \\
&= \sum_{i=1}^n (f(x_i-h)-f(x_i)) \int_0^1 \Phi_2(f(x_i) - f(x)) \d x \pm n \frac{|f'|_\infty a_n}{\sqrt{n}} \frac{|f'|_\infty}{n}.
\end{align*}
In the 3rd equality, we used \lemref{riemann} and the fact that $x \mapsto \Phi_2(f(x_i) - f(x))$ has derivative $-f'(x) \phi_2(f(x_i) - f(x))$, whose supnorm is bounded by $|f'|_\infty$.
Defining 
\begin{align*}
g_h(t) 
:= (f(t-h)-f(t)) \int_0^1 \Phi_2(f(t) - f(x)) \d x, 
\end{align*}
we have 
\begin{align*}
&\sum_{i=1}^n (f(x_i-h)-f(x_i)) \int_0^1 \Phi_2(f(x_i) - f(x)) \d x \\
&= \sum_{i=1}^n g_h(x_i) \\
&= n\, \bigg[\int_0^1 g_h(t) \d t \pm \frac{|g_h'|_\infty}{n} \bigg] \\
&= n \big[M(h/\sqrt{n}) - M(0)\big] \pm \omega_1(a_n/\sqrt{n}),
\end{align*}
using \lemref{riemann} in the 3rd equality, and where $\omega_1(\eps) := \sup_t \sup_{|s| \le \eps} |f'(t+s) - f'(t)|$, which is the modulus of continuity of $f'$. Note that $\omega_1(\eps) \to 0$ when $\eps \to 0$ by the fact that $f$ is assumed to be continuously differentiable (and 1-periodic).
Therefore, $\mu$ in the lemma is equal to 
\begin{align*}
n \big[M(h/\sqrt{n}) - M(0)\big] \pm \omega_1(a_n/\sqrt{n}) \pm \frac{|f'|_\infty^2 a_n}{\sqrt{n}},
\end{align*}
with the remainder terms tending to 0 and, for $h$ fixed, 
\begin{align*}
n \big[M(h/\sqrt{n}) - M(0)\big]
\xrightarrow{n\to\infty} \tfrac12 M''(0) h^2.
\end{align*}
With the fact that $M''$ is continuous under our assumption that $f$ is continuously differentiable, we conclude that $\mu$ is equal to
\begin{align*}
\tfrac12 M''(0) h^2 \pm Q_{1,n}, \quad Q_{1,n} \xrightarrow{n\to\infty} 0 \text{ in probability}.
\end{align*}
As for $V_i$ in the lemma, it is equal to
\begin{align}
&\frac1n \sum_{j=1}^n (f_j(h)-f_i(h)) \int \big[\IND{Y_i\le t} - \Phi(t-f(x_i))\big] \phi(t-f(x_j)) \d t \\
&= \frac1n \sum_{j=1}^n (f_j(h)-f_i(h)) \int \big[\IND{Z_i \le z + f(x_j)-f(x_i)} - \Phi(z + f(x_j)-f(x_i))\big] \phi(z) \d z \\
&\stackrel{d}{=} \frac1n \sum_{j=1}^n (f_j(h)-f_i(h)) \big[\Phi(Z_i + f(x_j)-f(x_i)) - \Phi_2(f(x_j)-f(x_i))\big], \label{normal_proof1}
\end{align}
using the fact that $\phi$ is symmetric about 0.
Note that
\begin{align*}
f_j(h) - f_i(h)
&= f(x_j+h/\sqrt{n})-f(x_j)-\big[f(x_i+h/\sqrt{n})-f(x_i)\big] \\
&= (h/\sqrt{n}) \big[f'(x_j)-f'(x_i) \pm \omega_1(h/\sqrt{n})\big] \\
&= (h/\sqrt{n}) [f'(x_j)-f'(x_i)] \pm (a_n/\sqrt{n}) \omega_1(a_n/\sqrt{n}).
\end{align*}
Hence, the quantity in \eqref{normal_proof1} is equal to
\begin{align*}
&\frac1n \sum_{j=1}^n (h/\sqrt{n}) (f'(x_j)-f'(x_i)) \big[\Phi(Z_i + f(x_j)-f(x_i)) - \Phi_2(f(x_j)-f(x_i))\big] \pm (a_n/\sqrt{n}) \omega_1(a_n/\sqrt{n}) \\
&= \frac{h}{\sqrt{n}} \big[\Lambda(x_i, Z_i) \pm \omega_1(1/n)\big] \pm (a_n/\sqrt{n}) \omega_1(a_n/\sqrt{n}),
\end{align*}
where
\begin{align*}
\Lambda(t, z)
&:= \int_0^1 (f'(x)-f'(t)) \big[\Phi(z + f(x)-f(t)) - \Phi_2(f(x)-f(t))\big] \d x \\
&:= \int_0^1 (f'(x)-f'(t)) \Xi(f(x)-f(t), z) \d x,
\end{align*}
using \lemref{riemann}.
We thus conclude that $\sum_{i=1}^n V_i$ is equal, in distribution, to
\begin{align*}
\frac{h}{\sqrt{n}} \sum_{i=1}^n \Lambda(x_i, Z_i) \pm Q_{2,n}, \quad Q_{2,n} \xrightarrow{n\to\infty} 0 \text{ in probability}. 
\end{align*}

By Markov's inequality and the fact that we only consider $W_n(h)$ for $|h| \le a_n$, we thus have that
\begin{align*}
W_n(h)
&\stackrel{d}{=} \tfrac12 M''(0) h^2 \pm Q_{1,n} + \frac{h}{\sqrt{n}} \sum_{i=1}^n \Lambda(x_i, Z_i) \pm Q_{2,n} \pm Q_{3,n},
\end{align*}
where 
\begin{align*}
\E[Q_{3,n}^2] 
&\le \frac{C}n \sum_{i=1}^n f_i(h)^2 
\le C (|f'|_\infty h/\sqrt{n})^2
\le \frac{C |f'|_\infty^2 a_n^2}n,
\end{align*}
so that $Q_{3,n} \to 0$ as $n\to 0$ in probability.
More succinctly, therefore, 
\begin{align}
W_n(h)
&\stackrel{d}{=} \tfrac12 M''(0) h^2 + \frac{h}{\sqrt{n}} \sum_{i=1}^n \Lambda(x_i, Z_i) + o_P(1). \label{W_linear}
\end{align}
Hence, by Slutsky's theorem in the form of \cite[Th~7.15]{kosorok2008introduction}, it suffices to look at 
\begin{equation}
G_n(h) := \tfrac12 M''(0) h^2 + \frac{h}{\sqrt{n}} \sum_{i=1}^n \Lambda(x_i, Z_i),
\end{equation}
which is an exceedingly simple process. (This is because we are effectively in a classical setting, even though the ranks obfuscate that.)

Indeed, note that $\Lambda(x_i, Z_i)$ is centered and bounded in absolute value by $2 |f'|_\infty$, and
\begin{align*}
&\frac1n \sum_{i=1}^n \Var[\Lambda(x_i, Z_i)] \\
&=\frac1n \sum_{i=1}^n \E[\Lambda(x_i, Z_i)^2] \\
&= \frac1n \sum_{i=1}^n \int_{-\infty}^\infty \bigg[\int_0^1 (f'(x) - f'(x_i))\, \Xi(f(x)-f(x_i), z) \d x\bigg]^2 \phi(z) \d z \\
&\xrightarrow{n\to\infty} \gamma^2,
\end{align*}
again applying \lemref{riemann}. 
Therefore, by Lyapunov's central limit theorem, 
\begin{align*}
\frac{1}{\sqrt{n}} \sum_{i=1}^n \Lambda(x_i, Z_i)
\overset{n\to\infty}{\Longrightarrow} \cN(0, \gamma^2).
\end{align*}
From this, it follows that $G_n$ converges weakly on every bounded interval to the Gaussian process given by
\begin{align*}
G(h) := \tfrac12 M''(0) h^2 + h \gamma U,
\end{align*}
where $U$ is a standard normal random variable. The limit process could not be simpler. In particular, $G$ has continuous sample paths (in fact, its sample paths are parabolas), and recalling that $M''(0) < 0$ by \prpref{local}, it is clear that $G$ has a unique maximum point at $h^* := (\gamma/M''(0)) U$. Note that $h^*$ is normal with mean zero and variance $\gamma^2/M''(0)^2$.
The proof of the theorem then follows from an application of \lemref{argmax}.

\subsection*{Acknowledgments}
We are very grateful to Richard Nickl and Nicolas Verzelen for helpful discussions and pointers.   

\small
\bibliographystyle{chicago}
\bibliography{ref}

\end{document}